\newcommand*{\mailto}[1]{\href{mailto:#1}{\nolinkurl{#1}}}
\newcommand{\arxiv}[1]{\href{http://arxiv.org/abs/#1}{arXiv:#1}}
\newtheorem{theorem}{Theorem}[section]
\newtheorem{definition}[theorem]{Definition}
\newtheorem{lemma}[theorem]{Lemma}
\newtheorem{proposition}[theorem]{Proposition}
\newtheorem{corollary}[theorem]{Corollary}
\newtheorem{remark}[theorem]{Remark}
\newcommand{\R}{{\mathbb R}}
\newcommand{\Z}{{\mathbb Z}}
\newcommand{\C}{{\mathbb C}}
\newcommand{\I}{\mathrm{i}}
\newcommand{\E}{\mathrm{e}}
\DeclareMathOperator{\re}{Re}
\DeclareMathOperator{\im}{Im}
\newcommand{\bK}{{\mathbf K}}
\newcommand{\bM}{{\mathbf M}}
\newcommand{\ve}{\varepsilon}
\newcommand{\calR}{\mathcal{R}}
\newcommand{\cM}{\mathcal{M}}
\newcommand{\floor}[1]{\lfloor#1 \rfloor}
\newcommand{\ceil}[1]{\lceil#1 \rceil}
\numberwithin{equation}{section}
\begin{document}

\title[Dispersion Estimates for Discrete Equations]{Dispersion Estimates for One-Dimensional Discrete Schr\"odinger and Wave Equations}

\author[I. Egorova]{Iryna Egorova}
\address{Faculty of Mathematics\\ University of Vienna\\
Oskar-Morgenstern-Platz 1\\ 1090 Wien\\ Austria\\ and Institute for Low Temperature Physics\\ 47, Lenin ave\\ 61103 Kharkiv\\ Ukraine}
\email{\href{mailto:iraegorova@gmail.com}{iraegorova@gmail.com}}

\author[E.\ Kopylova]{Elena Kopylova}
\address{Faculty of Mathematics\\ University of Vienna\\
Oskar-Morgenstern-Platz 1\\ 1090 Wien\\ Austria\\ and Institute for Information Transmission Problems\\ Russian Academy of Sciences\\
Moscow 127994\\ Russia}
\email{\mailto{Elena.Kopylova@univie.ac.at}}
\urladdr{http://www.mat.univie.ac.at/~ek/}

\author[G.\ Teschl]{Gerald Teschl}
\address{Faculty of Mathematics\\ University of Vienna\\
Oskar-Morgenstern-Platz 1\\ 1090 Wien\\ Austria\\ and International Erwin Schr\"odinger
Institute for Mathematical Physics\\ Boltzmanngasse 9\\ 1090 Wien\\ Austria}
\email{\mailto{Gerald.Teschl@univie.ac.at}}
\urladdr{\url{http://www.mat.univie.ac.at/~gerald/}}

\thanks{J. Spectr. Theory {\bf 5}, 663--696 (2015).}
\thanks{{\it Research supported by the Austrian Science Fund (FWF) under Grant No.\ Y330, M1329 and RFBR grants.}}

\keywords{Discrete Schr\"odinger equation, discrete wave equation, dispersive decay, scattering theory}
\subjclass[2010]{Primary 35Q41, 81Q15; Secondary 39A12, 39A70}

\begin{abstract}
We derive dispersion estimates for solutions of the one-dimensional discrete perturbed Schr\"odinger and wave equations.
In particular, we improve upon previous works and weaken the conditions on the potentials. To this end we also provide
new results concerning scattering for one-dimensional discrete perturbed Schr\"odinger operators which are
of independent interest. Most notably we show that the reflection and transmission coefficients belong to the
Wiener algebra.
\end{abstract}

\maketitle

\section{Introduction}

We are concerned with the one-dimensional discrete Schr\"odinger equation
\begin{equation} \label{Schr}
  \I \dot u(t)=H u(t):=(-\Delta_L  + q)\,u(t),\quad t\in\R,
\end{equation}
and the corresponding discrete wave (resp.\ Klein--Gordon) equation
\begin{equation} \label{KGE}
\ddot u(t)=(\Delta_L -\mu^2 - q)\,u(t),\quad t\in\R,\quad \mu\ge 0.
\end{equation}
with real potential $q$.
Here $\Delta_L$ is the discrete Laplacian given by
\begin{equation*}
  (\Delta_L u)_n=u_{n+1}-2u_n+u_{n-1}, \quad n\in\Z.
\end{equation*}
To formulate our results we introduce the weighted spaces $\ell^p_{\sigma}=\ell^p_{\sigma}(\Z)$,
$\sigma\in\R$, associated with the norm
\begin{equation*}
   \Vert u\Vert_{\ell^p_{\sigma}}= \begin{cases} \left( \sum_{n\in\Z} (1+|n|)^{p\sigma} |u(n)|^p\right)^{1/p}, & \quad p\in[1,\infty),\\
   \sup_{n\in\Z} (1+|n|)^{\sigma} |u(n)|, & \quad p=\infty. \end{cases}
\end{equation*}
Of course, the case $\sigma=0$ corresponds to the usual $\ell^p_0=\ell^p$ spaces without weight.

As our first main result we will prove the following $\ell^1\to \ell^\infty$ decay
\begin{equation}\label{fullp}
\Vert \E^{-\I tH}P_c\Vert_{\ell^1\to \ell^\infty}=\mathcal{O}(t^{-1/3}),\quad t\to\infty,
\end{equation}
under the assumption $q\in\ell^1_1$.
Here $P_c$ is the orthogonal projection in $\ell^2$ onto the
continuous spectrum of $H$. In this respect we recall that under the condition $q\in\ell^1_1$ it is well-known
\cite{tjac} that the spectrum of $H$ consists of a purely absolutely continuous part
covering $[0,4]$ plus a finite number of eigenvalues located in $\R\setminus[0,4]$.
In addition, there could be resonances at the edges of the continuous spectrum.

The dispersive decay \eqref{fullp} has been established by Pelinovsky and Stefanov \cite{PS}
under the assumption that there are no resonances and under the more restrictive condition
$|q_n|\le C (1+|n|)^{-\beta}$ with $\beta>5$. Cuccagna and Tarulli \cite{CT} establish
\eqref{fullp} under the assumption $q\in\ell^1_1$ if there are no resonances and under
the assumption $q\in\ell^1_2$ if there are resonances (which compares to the continuous case
established in \cite{GS}). Our main contribution here is to show that this extra decay
condition in the case of resonances is not necessary. Our novel proof is based on a simple but
useful generalization of the van der Corput lemma (Lemma~\ref{lem:vC}) together with the
novel fact that the scattering data associated with $H$ are in the Wiener algebra (Theorem~\ref{thm:scat}).
The latter result being of independent interest in scattering theory.

Moreover, \eqref{fullp} has some immediate consequences (under the same assumption $q\in\ell^1_1$).
First of all unitarity of $\exp(-\I t H) : \ell^2 \to \ell^2$ implies
\[
\Vert \E^{-\I tH}P_c(H)\Vert_{\ell^2\to \ell^2} \le 1
\]
and interpolating between this and our $\ell^1\to \ell^\infty$ estimate the Riesz--Thorin theorem gives
\begin{equation}
\Vert \E^{-\I tH}P_c(H)\Vert_{\ell^{p'}\to \ell^p} = \mathcal{O}(t^{-1/3(1/p'-1/p)})
\end{equation}
for any $p'\in[1,2]$ with $\frac{1}{p}+\frac{1}{p'}=1$. Moreover, we also can deduce some
corresponding Strichartz estimates from Theorem~1.2 of \cite{KT}. To this end we introduce the following space-time norms
\[
\|F\|_{L^q ,\ell^p} =\left( \int_\R \|F(t)\|^q_{\ell^p} d t \right)^{1/q}.
\]
Then
\begin{align}
\Vert \E^{-\I tH}P_c(H) f \Vert_{L^q, \ell^p} &\le C \|f\|_{\ell^2},\\
\Vert \int_\R \E^{-\I s H}P_c(H) F(s) ds\Vert_{\ell^2} &\le C \|F\|_{L^{q'}, \ell^{p'}},\\
\Vert \int_{s<t} \E^{-\I (t-s) H}P_c(H) F(s) ds\Vert_{L^q,\ell^p} &\le C \|F\|_{L^{q'}, \ell^{p'}},
\end{align}
where $p,q\ge 2$,
\[
\frac{1}{q}+\frac{1}{3p} \le \frac{1}{6},
\]
and a prime denotes the corresponding dual index.
Furthermore, \eqref{fullp} also implies
\[
\Vert \E^{-\I tH}P_c(H)\Vert_{\ell^2_\sigma\to \ell^2_{-\sigma}}=\mathcal{O}(t^{-1/3}),\quad t\to\infty, \quad \sigma>1/2.
\]
However, we will in fact establish the stronger result
\begin{equation}\label{as1-new}
\Vert \E^{-\I tH} P_c\Vert_{\ell^2_\sigma\to \ell^2_{-\sigma}}=\mathcal{O}(t^{-1/2}),\quad t\to\infty,\quad\sigma>1/2,
\end{equation}
which has not been obtained previously.

For the remaining results we restrict ourselves to the case when
the edges of the spectrum $\omega=0,4$ are
no resonances for the operator $H$. Then for $q\in\ell^1_2$ we show that
\begin{equation}\label{as-new}
\Vert \E^{-\I tH}P_c\Vert_{\ell^1_1\to \ell^\infty_{-1}}=\mathcal{O}(t^{-4/3}),
\quad t\to\infty.
\end{equation}
Such asymptotics with decay rate $t^{-3/2}$ were first established for continuous Schr\"o\-dinger equations 
by Schlag in \cite{S} in the case when the potential has a finite fourth moment and later refined by Goldberg \cite{G} to the case of a finite third moment.
For the discrete Schr\"odinger equations again asymptotics of the type \eqref{as-new} appear to be new.

Moreover, for $q\in\ell^1_2$ in  the non-resonant case we prove
\begin{equation}\label{as2-new}
\Vert \E^{-\I tH} P_c\Vert_{\ell^2_\sigma\to \ell^2_{-\sigma}}=\mathcal{O}(t^{-3/2}),\quad t\to\infty,\quad\sigma>3/2.
\end{equation}
Such a dispersive decay estimate was obtained for the first  time
in \cite{kkk} for discrete Schr\"odinger and Klein--Gordon equations with compactly
supported potentials. The result has been  generalized in \cite{PS} to discrete Schr\"odinger
equation with non-compactly supported potentials under the decay condition
$|q_n|\le C (1+|n|)^{-\beta}$ with $\beta>5$ and for $\sigma>5/2$.

Here we improve this result by both reducing the decay rate and the value of $\sigma$.
Again, this reduction relies on our new approach based on properties
of the Jost functions and the scattering matrix.

Finally, we  obtain similar asymptotics  for the wave (resp.\ Klein--Gordon) equation
\eqref{KGE} (except for the asymptotics in the resonant case when $\mu=0$).

In addition, we mention that asymptotics of the type \eqref{fullp}--\eqref{as2-new} play an important role in proving asymptotic
stability of solitons in the associated discrete nonlinear equations \cite{KPS,k09,PS11,SK}.
Analogous results for the continuous one-dimensional Schr\"odinger and Klein--Gordon equations will be given in \cite{EKMT}.

\section{Free discrete Schr\"odinger equation}
\label{free-sect}

As a warm-up we will first consider the free equation \eqref{Schr} with $q=0$ and denote $H_0=-\Delta_L$.
It is well-known (\cite[Sect.~1.3]{tjac}) that $H_0$ is self-adjoint and the discrete Fourier transform
\[
    \hat u(\theta)=\sum_{n\in\Z}u_n\E^{\I\theta n},
    \quad\theta\in \mathbb{T}:=\R/2\pi \Z.
\]
maps $H_0$ to the operator of multiplication by $\phi(\theta)=2-2\cos\theta$:
\[
-\widehat{H_0 u}(\theta)=\phi(\theta)\widehat u(\theta).
\]
In particular, the spectrum $\mathrm{Spec}(H_0)=[0,4]$ is purely absolutely continuous.

Adopting the notation $[K]_{n,k}$ for the kernel of an operator $K$, that is,
\[
(Ku)_n=\sum_{k\in\Z} [K]_{n,k}u_k,\quad n\in\Z,
\]
the kernel of the resolvent $\calR_0(\omega)=(H_0-\omega)^{-1}$ is given by (cf. \cite{kkk})
\begin{equation} \label{R02}
    [\calR_0(\omega)]_{n,k} =\frac 1{2\pi}\int\limits_{\mathbb{T}}\frac{\E^{-\I\theta(n-k)}}{\phi(\theta)  -\omega} d\theta
  = \frac{\E^{-\I\theta(\omega)|n-k|}}{2\I\sin\theta(\omega)},
  \quad\omega\in\Xi:=\C\setminus [0,4],
\end{equation}
$n,k\in\Z$. Here $\theta(\omega)$ is the unique solution of the equation
\begin{equation}\label{theta}
  2-2\cos\theta=\omega,\quad\theta\in \Sigma:=\{ -\pi\le\re\theta\le\pi,\;\im\theta< 0\}.
\end{equation}
Observe that $\theta\mapsto\omega=2-2\cos\theta$ is a biholomorphic map from $\Sigma\to\Xi$
with identified points $\theta=-\pi -\I a$ and $\theta=\pi - \I a$, $a\ge 0$.  Then the map $z=\E^{-\I\theta}$
is one-to-one from $\Sigma$ to the interior of the unit circle $|z|<1$. Note that the parameter $z$  is the standard spectral parameter for the Jacobi difference equation $a_{n-1}u_{n-1} +b_n u_n + a_n u_{n+1}=(z + z^{-1})u_n$, where $a_n\to 1$ and $b_n\to 0$ as $n\to\pm\infty$. The scattering theory of this equation can be found in \cite{tjac}.

The kernel of the free propagator can be easily
computed using the spectral theorem
\begin{align}\nonumber
[\E^{-\I tH_0}]_{n,k}&=\frac 1{2\pi\I}\int\limits_{[0, 4]}
\E^{-\I t\omega}[\calR_0(\omega+\I0)- \calR_0(\omega-\I0)]_{n,k}\,d\omega\\ \label{propag}
&=-\frac 1{4\pi}\int\limits_{[0, 4]} \E^{-\I t\omega}
\Big(\frac{\E^{-\I\theta_+(\omega)|n-k|}}{\sin\theta_+(\omega)}-
\frac{\E^{-\I\theta_-(\omega)|n-k|}}{\sin\theta_-(\omega)}\Big)d\omega\\ \nonumber
&=\frac 1{2\pi} \int_{-\pi}^{\pi} \E^{-\I t(2-2\cos\theta)} \E^{-\I\theta|n-k|}d\theta
\end{align}
where
\begin{equation}\label{thetadef}
\theta_+(\omega)=\theta(\omega + \I 0)\in [-\pi,0], \quad \theta_-(\omega)=\theta(\omega-\I 0)\in [0,\pi],\quad \omega\in [0,4].
\end{equation}
The last integral in \eqref{propag} is Bessel's integral implying
\begin{equation}
[\E^{-\I tH_0}]_{n,k}= \E^{\I(-2 t + \frac{\pi}{2} |n-k|)} J_{|n-k|}(2t),
\end{equation}
where $J_\nu(z)$ denotes the Bessel function of order $\nu$, \cite{W}.

For the free discrete Schr\"odinger equation the $\ell^1\to \ell^{\infty}$ decay
and  the $\ell^2_{\sigma}\to \ell^2_{-\sigma}$ decay
holds only with the rates  $t^{-1/3}$ and $t^{-1/2}$, respectively (the latter one being the same as in the  continuous case).
This is caused by the presence of resonances at the edge points $\omega=0$ and $\omega=4$.
\begin{proposition}\label{free-as}
The following asymptotics hold
\begin{equation}\label{H0-as1}
\Vert \E^{-\I tH_0}\Vert_{\ell^1\to \ell^{\infty}}=\mathcal{O}(t^{-1/3}),\quad t\to\infty,
\end{equation}
\begin{equation}\label{H0-as2}
\Vert \E^{-\I tH_0}\Vert_{\ell^2_\sigma\to \ell^2_{-\sigma}}=\mathcal{O}(t^{-1/2}),\quad t\to\infty,\quad\sigma>1/2.
\end{equation}
\end{proposition}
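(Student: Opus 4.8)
The plan is to reduce both estimates to the explicit propagator kernel
\[
[\E^{-\I tH_0}]_{n,k}= \E^{\I(-2 t + \frac{\pi}{2} |n-k|)} J_{|n-k|}(2t),
\]
equivalently to the oscillatory integral $I_m(t):=\int_{-\pi}^{\pi}\E^{-\I t(2-2\cos\theta)}\E^{-\I\theta m}\,d\theta=2\pi\,\E^{\I(-2t+\frac\pi2 m)}J_m(2t)$ with $m=|n-k|\ge 0$, and then to invoke classical properties of Bessel functions and van der Corput estimates, with all bounds \emph{uniform in $m$}.

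For \eqref{H0-as1} note that $\Vert\E^{-\I tH_0}\Vert_{\ell^1\to\ell^\infty}=\sup_{n,k}|[\E^{-\I tH_0}]_{n,k}|=\sup_{m\ge 0}|J_m(2t)|$, so it suffices to establish $\sup_{m\ge 0}|J_m(2t)|=\mathcal{O}(t^{-1/3})$. This is the classical uniform bound for Bessel functions \cite{W}; alternatively one estimates $I_m(t)$ directly. After extracting the unimodular factor $\E^{-2\I t}$ the phase is $S(\theta)=2t\cos\theta-m\theta$, and since $m$ enters only through $S'$, with $S''(\theta)=-2t\cos\theta$ and $S'''(\theta)=2t\sin\theta$ independent of $m$, I would cover $\mathbb{T}$ by a fixed neighbourhood of $\{0,\pi\}$, where $|S''|\ge t$, and its complement, where $|S'''|\ge c\,t$. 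The van der Corput lemma (Lemma~\ref{lem:vC}) applied with $k=2$, resp.\ $k=3$ (neither of which requires monotonicity of $S'$), then gives $|I_m(t)|\le C(t^{-1/2}+t^{-1/3})\le Ct^{-1/3}$ uniformly in $m$. (The exponent $-1/3$ is sharp: the stationary point $\sin\theta=-m/(2t)$ coalesces with the inflection point $\cos\theta=0$ exactly when $m=2t$, which is the origin of the band-edge resonances.)

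For \eqref{H0-as2} a pointwise kernel bound no longer suffices, since $|J_m(2t)|$ is only of order $t^{-1/3}$ for $m$ near $2t$, and a naive Schur test fails because $\sum_k(1+|k|)^{-\sigma}$ diverges for $\sigma\le 1$. Instead I would bound the weighted operator norm by the Hilbert--Schmidt norm,
\[
\Vert\E^{-\I tH_0}\Vert^2_{\ell^2_\sigma\to\ell^2_{-\sigma}}\le\sum_{n,k}(1+|n|)^{-2\sigma}(1+|k|)^{-2\sigma}\,|J_{|n-k|}(2t)|^2,
\]
and sum first over the pairs with fixed $n-k=m$. Using the elementary weight bound $\sum_n(1+|n|)^{-2\sigma}(1+|n-m|)^{-2\sigma}\le C(1+|m|)^{-2\sigma}$, valid because $\sigma>1/2$, the right-hand side is dominated by $C\sum_{m\in\Z}(1+|m|)^{-2\sigma}|J_{m}(2t)|^2$. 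I then split this sum at $|m|=t$: for $|m|\le t$ one is well away from the turning point, so $|J_{m}(2t)|\le C(4t^2-m^2)^{-1/4}\le Ct^{-1/2}$ \cite{W}, and $\sum_{|m|\le t}(1+|m|)^{-2\sigma}\le C$ since $\sigma>1/2$, giving $\mathcal{O}(t^{-1})$; for $|m|>t$ one uses $(1+|m|)^{-2\sigma}\le t^{-2\sigma}$ together with $\sum_{m\in\Z}J_m(2t)^2=1$ (equivalently, unitarity of $\E^{-\I tH_0}$ on $\ell^2$), giving $\mathcal{O}(t^{-2\sigma})=\mathcal{O}(t^{-1})$. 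Hence $\Vert\E^{-\I tH_0}\Vert_{\ell^2_\sigma\to\ell^2_{-\sigma}}^2=\mathcal{O}(t^{-1})$, which is \eqref{H0-as2}.

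The main obstacle is \eqref{H0-as2}: one cannot argue pointwise in the region $m\approx 2t$, and the decisive point is to replace the pointwise approach there by the $\ell^2$-orthogonality $\sum_m J_m(2t)^2=1$, which lets the polynomial weight $(1+|m|)^{-2\sigma}$ be absorbed as soon as $\sigma>1/2$. In the first estimate, by contrast, it is the derivative-free ($k\ge 2$) van der Corput bounds that make the uniformity in $m$ automatic.
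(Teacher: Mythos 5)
Your argument for \eqref{H0-as1} is essentially the paper's: the same splitting of $[-\pi,\pi]$ into finitely many intervals on which either the second or the third derivative of the phase is bounded below by $ct$, followed by van der Corput (the ordinary version suffices, since the amplitude is $1$, so you do not actually need Lemma~\ref{lem:vC}); the shortcut via the classical uniform bound $\sup_\nu|J_\nu(x)|=O(x^{-1/3})$ is of course also fine. For \eqref{H0-as2} you take a genuinely different and considerably shorter route. The paper isolates the neighbourhood $|\theta+\pi/2|\le\pi/6$ of the degenerate stationary point and runs a multiscale decomposition (Lemma~\ref{Lem:K}) with scales $t_j=t^{-(1/2)^j}$, a counting argument on the set $|2t-|n-k||\le t_jt^{1+\ve}$, and integration by parts off that set. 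You instead exploit the explicit Bessel kernel: after the convolution bound $\sum_n(1+|n|)^{-2\sigma}(1+|n-m|)^{-2\sigma}\le C(1+|m|)^{-2\sigma}$ (valid since $2\sigma>1$), the Hilbert--Schmidt norm squared is controlled by $\sum_m(1+|m|)^{-2\sigma}J_m(2t)^2$, which you split at $|m|=t$, using Watson's bound $|J_m(2t)|\le C(4t^2-m^2)^{-1/4}\le Ct^{-1/2}$ in the oscillatory region and the Parseval identity $\sum_m J_m(2t)^2=1$ (unitarity) to absorb the turning-point region $m\approx 2t$ at cost $t^{-2\sigma}\le t^{-1}$. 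Both pieces give $O(t^{-1})$ for the squared norm, so your proof is correct. What the paper's heavier machinery buys is reusability: exactly the same scheme is invoked again in the perturbed setting (Theorem~\ref{end1}, step ii), where the kernel is no longer a Bessel function and the orthogonality identity has no direct analogue for the individual pieces $\mathcal{K}^\pm$; your argument, while cleaner for the free case, does not transfer there.
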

\begin{proof}
{\it Step i)}
To establish \eqref{H0-as1} consider  $t\ge 1$ and set $v:=|n-k|/t\ge 0$. We start from
\begin{equation}
[\E^{-\I tH_0}]_{n,k}=\frac 1{2\pi}\int\limits_{-\pi}^{\pi}\E^{-\I t(2-2\cos\theta+v\theta)}
d\theta
\end{equation}
which is an oscillatory integral with the phase function
\begin{equation}\label{phi-def}
\phi_v(\theta)=2-2\cos\theta+v\theta.
\end{equation}
The stationary points are the solution
of the equation $\phi'_v(\theta)=2\sin\theta+v=0$.
If $v>2$ the phase function has no stationary points.
For any $v<2$ the phase function
has two stationary points $\theta_{1,2}$, which
are non-degenerate, i.e.\ $\phi''_v(\theta_{1,2})\not=0$.
In the case $v=2$  the phase function has a unique degenerate stationary point
$\theta=-\pi/2$ satisfying
\begin{equation}
\phi''_2(-\pi/2)=0,\quad \phi'''_2(-\pi/2)=2\not =0.
\end{equation}
Then, since $\phi_v''(\theta)= 2\cos(\theta)$ and $\phi_v'''(\theta)= -2\sin(\theta)$, we can split our domain of
integration into four intervals where either $|\phi_v''(\theta)|\ge \sqrt{2}$ or $|\phi_v'''(\theta)|\ge \sqrt{2}$.
Applying the van der Corput lemma \cite[page 334]{St} on each interval gives \eqref{H0-as1}.
\smallskip\\
{\it Step ii)}
To establish \eqref{H0-as2} we represent  $\E^{-\I tH_0}$ as the sum
\[
\E^{-\I tH_0}=\frac 1{2\pi}(K(t)+\tilde K(t)),
\]
where
\[
[K(t)]_{n,k}=\int\limits_{|\theta+\frac{\pi}2|\le \frac{\pi}6}
\E^{-\I t\phi_v(\theta)}d\theta,\quad
[\tilde K(t)]_{n,k}= \int\limits_{|\theta+\frac{\pi}2|\ge \frac{\pi}6}\E^{-\I t\phi_v(\theta)}d\theta.
\]
By the stationary phase method we infer
\[
\sup_{n,k\in\Z}|[\tilde K(t)]_{n,k}|\le C t^{-1/2},\quad t\ge 1,
\]
implying \eqref{H0-as2} for $\tilde K(t)$. The required estimate for $K(t)$ will follow from the next lemma.
\begin{lemma}\label{Lem:K}
For any $\sigma>1/2$ the following estimate holds:
\begin{equation}\label{K-est}
\sum\limits_{n,k\in\Z}[K(t)]^2_{n,k}
\frac{1}{(1+|n|)^{2\sigma}(1+|k|)^{2\sigma}}\le Ct^{-1}.
\end{equation}
\end{lemma}
\begin{proof}
For any fixed  $\sigma>1/2$,  there exist an integer $N>0$ such that
\begin{equation}\label{sigma}
\sigma>1/2+(1/2)^{N}.
\end{equation}
Denote $t_j=t^{-(\frac 12)^j}$, $1\le j\le N$,  $t_0=0$, $t_{N+1}=\pi/6$, and  represent $K(t)$ as the sum
\[
K(t)=\sum_{j=0}^{N}K_{j}(t),
\]
where  $K_{j}(t)$, $0\le j\le N$,
is the integral over $t_j\le |\theta+\frac{\pi}2|\le t_{j+1}$.
We will establish a bound of type \eqref{K-est} for each summand.
For $K_{0}(t)$ the bound evidently holds.
Furthermore, by the van der Corput Lemma
\begin{equation}\label{C1}
\sup_{n,k\in\Z}|\!\!\!\int\limits_{t_j\le|\theta+\frac{\pi}2|\le a}
\!\!\!\!\!\E ^{-\I t\phi_v(\theta)}d\theta|
\le Ct^{-1/2}\left(\min_{t_j\le|\theta+\frac{\pi}2|\le \frac{\pi}6}|\phi''_v(\theta)|\right)^{-1/2}
\!\!\!\le C(tt_j)^{-1/2}
\end{equation}
for any $a\in[t_j,\pi/6]$ implying
\begin{equation}\label{C7}
\sup_{n,k\in\Z}|[K_j(t)]_{n,k}|\le Ct^{-1/2}t_j^{-1/2},\quad j=1,\dots,N.
\end{equation}
To get the estimate \eqref{K-est} for each $K_j(t)$, $1\le j\le N$,
we choose  $\ve=2^{-N}$, so that $t^\ve=t_N^{-1}$, and consider
two different cases: $|2-v|\in [0,t_jt^{\ve}]$ and
$|2-v|\in[t_jt^{\ve},2]$ separately.

In the first case we take $T_j=\left\{(n,k)\in\Z^2:\ |2t-|n-k||\le t_jt^{1+\ve}\right\}$
as the domain of summation. Since this domain is symmetric  with respect to the
map $(n,k)\mapsto (-n, -k)$, we can make the change of variables $p=n-k$, $q=n+k$ and estimate
\[
b_j(t):=\sum_{(n,k)\in T_j}\frac {1}{(1+|n|)^{2\sigma}(1+|k|)^{2\sigma}}
\]
as
\[
b_j(t)\le \sum_{q\in\Z}\sum_{p=\ceil{2t-t_jt^{1+\ve}}}^{\floor{2t+t_jt^{1+\ve}}}
\frac {2}{(1+\frac{1}{2}|p+q|)^{2\sigma}(1+\frac{1}{2}|p-q|)^{2\sigma}},
\]
where $\floor{\cdot}$, $\ceil{\cdot}$ denote the usual floor and ceiling functions.
The sum with respect to $p$  is finite with the number of summands less then
$2\floor{t_jt^{1+\ve}}+2$. Since $t_jt^{1+\ve}\le t$ for $j=1,\dots, N$
 we have $p\geq t$ in the domain of summation.
Consequently $p+q\geq t$ for $q\geq 0$ and $p-q\geq t$ for $q<0$.
Using these estimates and interchanging the order of summation we get
\begin{equation}\label{GGG}
 b_j(t)\leq C\,\frac{\floor{t_jt^{1+\ve}}}{t^{2\sigma}}\leq C t_jt^{1+\ve-2\sigma}.
\end{equation}
Thus, by \eqref{C7}, \eqref{GGG}, and \eqref{sigma}
\begin{equation}\label{GG}
\sum_{n,k\in T_j}\frac {([K_j(t)]_{n,k})^2}{(1+|n|)^{2\sigma}(1+|k|)^{2\sigma}}
\leq\sup_{n,k\in \Z}([K_j(t)]_{n,k})^2 b_j(t)\leq C t^{-2\sigma + \ve}\leq C t^{-1}.
\end{equation}
In the second case $(n,k)\notin T_j$ we have (using $\theta+\pi/2=\psi$)
\begin{align*}
|[K_j(t)]_{n,k}|&=\left|\;\int\limits_{t_j\le|\psi|\le t_{j+1}} \!\!\!\! \E^{-\I t(v\psi-2\sin\psi)}d\psi \right|
=2\left|\;\int\limits_{t_j\le\psi\le t_{j+1}} \!\!\!\! \cos\big( t(v\psi-2\sin\psi) \big)d\psi \right|.
\end{align*}
Applying integration by parts we get
\begin{align}\label{denom}
|[K_j(t)]_{n,k}|
&\le \frac{2}{t}\Big(\frac{1}{|v-2\cos t_j|} +\frac{1}{|v-2\cos t_{j+1}|}
+\int_{t_j}^{t_{j+1}} \!\! \frac{2\sin(\psi) d\psi}{(v-2\cos\psi)^2}\Big)\\
\nonumber
&\le \frac{4}{t}\left(\frac{1}{|4\sin^2(t_j/2)+v-2|}+
\frac{1}{|4\sin^2(t_{j+1}/2)+v-2|}\right).
\end{align}
Since for $j=1,\dots,N-1$ we have $|v-2|\ge t_jt^{\ve}\gg t_{j+1}^2=t_{j}> t_j^2$, we see
\begin{equation}\label{dop15}
|4\sin^2\frac{t_{j+s}}{2}+v-2|\geq |v-2| - 4\sin^2\frac{t_{j+s}}{2}\geq |v-2| - t_{j+s}^2\geq |v-2|-t_j
\end{equation}
for $s=0,1$.
But $|v-2| - t_j\geq t_j (t^{\ve}-1)\geq C t_j$, therefore
\begin{equation}\label{dop14}
\sup\limits_{(n,k)\notin T_j}|[K_j(t)]_{n,k}|\le Ct^{-1}t_j^{-1}\le Ct^{-1/2},\qquad j=1,\dots,N-1.
\end{equation}
For $j=N$ we have $|v-2|\geq 1$ and thus $4\sin^2(t_{N+1}/2)=4\sin^2(\pi/12)< |v-2|/2$.  Respectively,
$|4\sin^2(t_{N+1}/2)+v-2|\geq |v-2|/2,$ which implies $|[K_N(t)]_{n,k}|\le Ct^{-1}$.
Combining this with \eqref{GG} we get \eqref{K-est} for each $ K_j(t)$ as $1\le j\le N$.
\end{proof}
\noindent
This finishes the proof of Proposition \ref{free-as}.
\end{proof}
\begin{remark}
The decay rate in \eqref{H0-as1}  is ``sharp'' as can be seen from the following asymptotics of the Bessel function
\[
 J_t(t)\sim t^{-1/3},\quad t\to \infty,
\]
see \cite[Section 8.2]{W}.
\end{remark}
\section{Jost solutions and the resolvent}
\label{st-sec}
Consider the  Jost solutions
$f^\pm(\theta)$ to the equation
\begin{equation*}
Hf:=(-\Delta_L+q)f=\omega f,
\end{equation*}
normalized as
\begin{equation*}
f^\pm_n(\theta)\sim  \E^{\mp \I n \theta},\quad n \to \pm \infty,
\end{equation*}
where $\omega\in {\overline\Xi}$ and $\theta=\theta(\omega)\in{\overline\Sigma}$  (cf.\ \eqref{theta}).
For $q\in\ell_1^1$ this solution exists everywhere in $\overline\Xi$, but for $q\in\ell^1$ it exists outside of the edges of continuous spectrum.
Introduce
\begin{equation}\label{Jostcut}
h^\pm_n(\theta)=\E^{\pm \I n \theta}f^\pm_n(\theta)
\end{equation}
and set
\[
\overline\Sigma_\delta:=\{\theta\in\overline\Sigma:\, |\E^{-\I\theta} \pm 1|>\delta
\},\quad 0<\delta<\sqrt 2.
\]

\begin{lemma}\label{newl}
(i) Let $q\in\ell^1_s$ with $s=0,1,2$. Then the functions $h^\pm_n(\theta)$ can be differentiated $s$ times on $\overline\Sigma_\delta$, and  the following estimates hold:
\begin{equation}\label{dh-est}
|\frac{\partial^p}{\partial \theta^p} h^\pm_n(\theta) |\le C(\delta)\max ((\mp n)|n|^{p-1}, 1), \quad n\in\Z,
\quad 0\le p\le s,\quad \theta\in \overline\Sigma_\delta.
\end{equation}\\
(ii) If additionally  $q\in \ell_{s+1}^1$, then $h^\pm_n(\theta)$ can be differentiated $s$ times on
$\overline\Sigma$,
and the following estimates hold:
\begin{equation}\label{Jostderiv}
|\frac{\partial^{p}}{\partial \theta^{p}} h^\pm_n(\theta) |\le C\max ((\mp n)|n|^{p}, 1),\quad n\in\Z,
\quad 0\le p\le s,\quad\theta\in\overline\Sigma.
\end{equation}
\end{lemma}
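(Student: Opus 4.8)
The plan is to convert the problem into a discrete Volterra (summation) equation for $h^+_n$ and solve it by successive approximation, deriving the estimates by induction on $p$; the bounds for $h^-$ then follow from those for $h^+$ via the reflection $n\mapsto -n$, $q_n\mapsto q_{-n}$, which interchanges the two Jost solutions and flips the sign of $n$ in the bounds. Substituting $f^+_n=\E^{-\I n\theta}h^+_n$ into $f^+_{n+1}+f^+_{n-1}=(\E^{\I\theta}+\E^{-\I\theta}+q_n)f^+_n$ gives the first-order relation $\E^{-\I\theta}(h^+_{n+1}-h^+_n)-\E^{\I\theta}(h^+_n-h^+_{n-1})=q_n h^+_n$; summing this twice starting at $+\infty$ (where $h^+_n\to1$ and $h^+_{n+1}-h^+_n\to0$) yields
\[
 h^+_n(\theta)=1+\sum_{l>n}K(n,l,\theta)\,q_l\,h^+_l(\theta),\qquad
 K(n,l,\theta)=\sum_{j=1}^{l-n}\E^{-\I(2j-1)\theta}=\frac{1-\E^{-2\I(l-n)\theta}}{2\I\sin\theta}.
\]

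First I record the kernel estimates. Because $\im\theta\le0$ and $l>n$ one has $|\partial_\theta^a K(n,l,\theta)|\le C_a(l-n)^{a+1}$ on all of $\overline\Sigma$, while on $\overline\Sigma_\delta$ the closed form together with $|\sin\theta|\ge\delta^2/2$ (which holds there since $|\sin\theta|=|1-\E^{-2\I\theta}|/(2|\E^{-\I\theta}|)$ and $|\E^{-\I\theta}|\le1$) gives the sharper $|\partial_\theta^a K(n,l,\theta)|\le C_a(\delta)(l-n)^a$; in particular $|K|\le l-n$ on $\overline\Sigma$ and $|K|\le C(\delta)$ on $\overline\Sigma_\delta$. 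This one-power gain off the edges is exactly the source of the discrepancy between \eqref{dh-est} and \eqref{Jostderiv}.

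Next, differentiating the summation equation $p$ times and isolating the term containing $\partial_\theta^p h^+_l$ shows that $\partial_\theta^p h^+$ again solves a Volterra equation with kernel $K$ and inhomogeneity $F^{(p)}_n=\sum_{l>n}\sum_{a=1}^p\binom{p}{a}(\partial_\theta^a K(n,l,\theta))\,q_l\,(\partial_\theta^{p-a}h^+_l)$ (with $F^{(0)}\equiv1$). Writing $n_-=\max(-n,0)$, the elementary inequality $(l-n)(1+l_-)\le(1+n_-)(1+|l|)$ for $l>n$, combined with the kernel bounds and the induction hypothesis $|\partial_\theta^{p-a}h^+_l|\le C(1+l_-)^{p-a}$ (part (i)) resp.\ $\le C(1+l_-)^{p-a+1}$ (part (ii)), yields $|F^{(p)}_n|\le C(\delta)(1+n_-)^p$ on $\overline\Sigma_\delta$ when $q\in\ell^1_s$, $p\le s$, and $|F^{(p)}_n|\le C(1+n_-)^{p+1}$ on $\overline\Sigma$ when $q\in\ell^1_{s+1}$, $p\le s$ (the bookkeeping consists of applying the inequality to the smaller of the two powers and using $l-n\le(1+n_-)(1+|l|)$ for the remainder, so that only moments of $q$ up to the available order are used). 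Summing the Volterra series then finishes the proof: on $\overline\Sigma_\delta$ the bound $|K|\le C(\delta)$ and the ordering $n<l_1<\dots<l_m$ immediately give $|(V^mF^{(p)})_n|\le C(\delta)^m\|q\|_{\ell^1}^m/m!\cdot\sup_{l\ge n}|F^{(p)}_l|$, hence $|\partial_\theta^p h^+_n|\le C(\delta)(1+n_-)^p$, which is \eqref{dh-est}; on $\overline\Sigma$ one uses the discrete summation-by-parts identity $\sum_{l>n}(1+|l|)^{r}|q_l|\,W_r(l)^m\le W_r(n)^{m+1}/(m+1)$ with $W_r(n)=\sum_{l>n}(1+|l|)^r|q_l|$, which — after peeling off the factor $(1+n_-)^{p+1}$ by a single use of $(l-n)(1+l_-)\le(1+n_-)(1+|l|)$ at the outermost step — restores the $1/m!$ and gives $|(V^mF^{(p)})_n|\le C(1+n_-)^{p+1}W_{p+1}(n)^m/m!$, whence $|\partial_\theta^p h^+_n|\le C(1+n_-)^{p+1}$, which is \eqref{Jostderiv}. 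Uniform convergence of the differentiated series justifies differentiating term by term and establishes the asserted smoothness.

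The step I expect to be the main obstacle is this last, weighted, summation on $\overline\Sigma$: the naive per-factor bound $|K(l_{i-1},l_i,\theta)|\le l_i-l_{i-1}\le(1+|l_{i-1}|)(1+|l_i|)$ produces only a bound growing exponentially in $|n|$, and extracting the correct polynomial growth requires recognizing that the factor $(1+n_-)^{p+1}$ is generated solely by the outermost application of $K$ (or of $\partial_\theta^a K$ inside $F^{(p)}$), while all remaining iterations telescope against the weight $(1+|l|)^{p+1}|q_l|$ — summable precisely because $q\in\ell^1_{s+1}$. Matching the exact powers of $|n|$ in \eqref{dh-est} versus \eqref{Jostderiv} is then only bookkeeping, the unit discrepancy reflecting the loss of one power of $(l-n)$ in the kernel estimates at the spectral edges.
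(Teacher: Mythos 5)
Your proof is correct and follows essentially the same route as the paper: the Volterra summation equation for $h^+_n$ with kernel $q_l\sum_{j=1}^{l-n}\E^{-\I(2j-1)\theta}$, successive approximations, and induction on the order of differentiation, with the dichotomy between the bounded kernel on $\overline\Sigma_\delta$ and the kernel of size $l-n$ on $\overline\Sigma$ accounting for the extra power of $|n|$ in \eqref{Jostderiv}. The only differences are cosmetic: you differentiate in $\theta$ where the paper uses $z=\E^{-\I\theta}$, and you spell out the telescoping $1/m!$ bookkeeping that the paper delegates to the references \cite{tjac} and \cite{EG}.
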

\begin{proof}
The proof of \eqref{dh-est} is similar for $``+"$ and $``-"$ cases, hence we give it only for the $``+"$ case. Denote
 $h_n(z)=h_n^+(\theta)$ with $z=\E^{-\I\theta}$, $|z|\leq 1$.
Function $h_n(z)$ satisfies the integral equation (see \cite{tjac})
\begin{equation}\label{green}
h_n(z)=1 +\sum_{m=n+1}^\infty G(n,m,z) h_m(z),\quad  G(n,m,z):=q_m\frac{z^{2m - 2n} - 1}{z^{-1} - z}.
\end{equation}
For $\theta\in \overline\Sigma_\delta$ we have  $|z^2 -1|\geq C(\delta)>0$. Then
\[
|G(n,m,z)|\leq \frac{2|z| |q_m|}{|z^2 - 1|}\leq C(\delta) |q_m|,\quad m-n> 0,
\]
and the method of successive approximations as in \cite{EG} implies $|h_n(z)|\leq C(\delta)$.
Then \eqref{dh-est} with $p=0$ follows.
Further,
\begin{equation}
\label{ddots}|\frac{d^p}{dz^p} G(n,m,z)|\leq C(\delta) (m-n)^p |q_m|,\quad p\ge 1,\quad m-n> 0,
\quad \theta\in \overline\Sigma_\delta.
\end{equation}
Now let $q\in\ell^1_1$. Consider the first derivative of $h_n(z)$. We have
\begin{equation}\label{appr}
\frac{d}{dz} h_n(z)= \phi_n(z)+\sum_{m=n+1}^\infty G(n,m,z) \frac{d}{dz}h_m(z),
\end{equation}
where
\[
\phi_n(z):=\sum_{m=n+1}^\infty h_m(z)\frac{d}{dz} G(n,m,z)
\]
with $|\phi_n(z)|\leq C(\delta)$ as $ n\geq 0$ and $\theta\in \overline\Sigma_\delta$
by \eqref{dh-est} with $p=0$ and \eqref{ddots}.
Applying the method of successive approximations  to \eqref{appr} we get \eqref{dh-est} with $p=1$.
For the case $p= 2$ we proceed in the same way.

The estimate \eqref{Jostderiv} can be obtained from \eqref{green} by the same approach by virtue of the estimate
$|\frac{d^p}{dz^p}G(n,m,z)|\leq 2|q_m|(m-n)^{p+1}$, which is valid for all $|z|\leq 1$ and $m>n$.
\end{proof}
\begin{corollary} In the case $q\in \ell^1$, Lemma \ref{newl} (i)  implies in particular that for any
$\theta\in\overline\Sigma\setminus\{0,\pi,-\pi\}$
we got the estimate $|h_n^\pm(\theta)|\leq C(\theta)$ for all $n\in\Z$, where $C(\theta)$ can be chosen uniformly in compact subsets of $\overline{\Sigma}$ avoiding the band edges. Together with \eqref{Jostcut} this implies
\begin{equation}\label{De0}
| f^{\pm}_n(\theta) |\le C(\theta) \E^{\pm \im(\theta)n}\, ,\quad \theta\in\overline\Sigma\setminus\{0,\pi,-\pi\},\quad n\in\Z.
\end{equation}
\end{corollary}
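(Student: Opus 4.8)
The Corollary is essentially a restatement of the $p=0$ case of Lemma~\ref{newl}(i), so the plan is short: (a) convert the hypothesis ``$\theta\neq 0,\pm\pi$'' into membership in some $\overline\Sigma_\delta$, (b) invoke Lemma~\ref{newl}(i) with $s=0$, (c) promote the resulting pointwise bound to a locally uniform one by compactness, and (d) pass from $h_n^\pm$ to $f_n^\pm$ via \eqref{Jostcut}.

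For (a) I would note that under $\theta\mapsto\omega=2-2\cos\theta$ together with $z=\E^{-\I\theta}$, the band edges $\omega=0$ and $\omega=4$ correspond exactly to $z=1$ (that is, $\theta=0$) and $z=-1$ (that is, $\theta=\pm\pi$). Hence for a fixed $\theta_0\in\overline\Sigma\setminus\{0,\pi,-\pi\}$ both $|\E^{-\I\theta_0}-1|$ and $|\E^{-\I\theta_0}+1|$ are strictly positive, so $\delta_0:=\frac{1}{2}\min\{|\E^{-\I\theta_0}-1|,|\E^{-\I\theta_0}+1|\}>0$ and $\theta_0\in\overline\Sigma_{\delta_0}$. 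This is consistent with the fact, recalled above, that for $q\in\ell^1$ the Jost solution is defined precisely off the band edges.

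For (b) and (c): Lemma~\ref{newl}(i) with $s=0$ applies on $\overline\Sigma_{\delta_0}$ and gives $|h_n^\pm(\theta_0)|\le C(\delta_0)=:C(\theta_0)$ for all $n\in\Z$. If $K\subset\overline\Sigma$ is compact with $K\cap\{0,\pi,-\pi\}=\emptyset$, then $\theta\mapsto\min\{|\E^{-\I\theta}-1|,|\E^{-\I\theta}+1|\}$ is continuous and strictly positive on $K$, hence bounded below by some $2\delta_K>0$; thus $K\subset\overline\Sigma_{\delta_K}$ and $\sup_{\theta\in K}|h_n^\pm(\theta)|\le C(\delta_K)$ uniformly in $n$.

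For (d): by \eqref{Jostcut} we have $f_n^\pm(\theta)=\E^{\mp\I n\theta}h_n^\pm(\theta)$, and since $|\E^{\mp\I n\theta}|=\E^{\re(\mp\I n\theta)}=\E^{\pm\im(\theta)n}$, it follows that $|f_n^\pm(\theta)|=\E^{\pm\im(\theta)n}|h_n^\pm(\theta)|\le C(\theta)\E^{\pm\im(\theta)n}$, which is \eqref{De0}. I do not expect any genuine obstacle; the only step worth spelling out is (a), namely that ``$\theta\notin\{0,\pi,-\pi\}$'' is equivalent to ``$\theta\in\overline\Sigma_\delta$ for some $\delta>0$'', which is just the identification of the band edges with $z=\pm1$ in the $z=\E^{-\I\theta}$ variable.
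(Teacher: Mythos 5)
Your proposal is correct and follows exactly the route the paper intends: the corollary is an immediate consequence of Lemma~\ref{newl}(i) with $s=0$ (the paper gives no separate proof), and your steps --- identifying the band edges with $z=\E^{-\I\theta}=\pm1$ so that $\theta\notin\{0,\pi,-\pi\}$ places $\theta$ in some $\overline\Sigma_\delta$, using compactness for the uniform constant, and computing $|\E^{\mp\I n\theta}|=\E^{\pm\im(\theta)n}$ via \eqref{Jostcut} --- are precisely the omitted details.
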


Given the Jost solutions we can express the kernel of the resolvent $\calR(\omega)=(H-\omega)^{-1}:\ell^2 \to \ell^2$ for
$\omega\in \C\setminus\mathrm{spec}(H)$ as (cf.\ \cite[(1.99)]{tjac})
\begin{equation}\label{RJ1-rep}
[\calR(\omega)]_{n,k} = \frac{1}{W(\theta(\omega))} \left\{ \begin{array}{cc}
f_n^+(\theta(\omega)) f_k^-(\theta(\omega))
\;\; \mbox{for} \;\; n \ge k, \\[2mm]
f_k^+(\theta(\omega)) f_n^-(\theta(\omega))
 \;\; \mbox{for} \;\; n\le k, \end{array} \right.
\end{equation}
where
\begin{equation}\label{alg2}
W(\theta): = W(f^+(\theta), f^-(\theta))=f^+_0(\theta) f^-_{1}(\theta) - f^+_{1}(\theta) f^-_0(\theta)
\end{equation}
is the Wronskian of the Jost solutions.
Recall that $\theta\mapsto\omega(\theta)$ is a biholomorphic map $\Sigma\to\Xi$.

The representation \eqref{RJ1-rep}, the fact that $W(\theta)$ does not vanish for $\omega\in(0,4)$,
and the bound \eqref{De0} imply the
limiting absorption principle for the perturbed one-dimensional Schr\"odinger equation.
\begin{lemma}\label{BV}
Let $q\in\ell^1$. Then the convergence
\begin{equation}\label{esk}
   \calR(\omega\pm \I\varepsilon)\to \calR(\omega\pm \I0),\quad\varepsilon\to 0+,\quad \omega\in (0,4)
\end{equation}
holds in ${\mathcal L}(\ell^2_\sigma,\ell^2_{-\sigma})$ with $\sigma>1/2$.
\end{lemma}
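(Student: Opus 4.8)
The plan is to deduce the limiting absorption principle from the explicit resolvent representation \eqref{RJ1-rep} together with the bound \eqref{De0} on the Jost solutions. First I would fix $\omega\in(0,4)$, so that $\theta(\omega\pm\I 0)=\theta_\pm(\omega)$ lies in the interior of $\overline\Sigma$, bounded away from the band edges $\{0,\pm\pi\}$; hence the Corollary to Lemma~\ref{newl} applies and gives $|f^\pm_n(\theta(\omega\pm\I 0))|\le C(\omega)\E^{\pm\im(\theta)n}$ with $\im\theta_\pm(\omega)<0$, i.e.\ exponential decay of $f^+$ at $+\infty$ and of $f^-$ at $-\infty$. Since $W(\theta)$ is continuous and nonvanishing for $\omega\in(0,4)$ (the Jost solutions being linearly independent away from eigenvalues and edge resonances, which live outside $[0,4]$), the prefactor $1/W(\theta(\omega))$ is bounded near the chosen $\omega$.

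The key analytic step is to check that $\calR(\omega\pm\I 0)$, defined via \eqref{RJ1-rep} with $\theta=\theta_\pm(\omega)$, is a bounded operator from $\ell^2_\sigma$ to $\ell^2_{-\sigma}$ for $\sigma>1/2$. Writing out the kernel, for $n\ge k$ one has $|[\calR(\omega\pm\I 0)]_{n,k}|\le C(\omega)\E^{\im(\theta)(n-k)}\le C(\omega)$ using $n-k\ge 0$ and $\im\theta<0$ (and symmetrically for $n\le k$), so in fact the kernel is uniformly bounded. Thus it suffices to observe that the operator with kernel $(1+|n|)^{-\sigma}(1+|k|)^{-\sigma}$ times a bounded kernel is Hilbert--Schmidt on $\ell^2$, because $\sum_{n,k}(1+|n|)^{-2\sigma}(1+|k|)^{-2\sigma}=\big(\sum_n(1+|n|)^{-2\sigma}\big)^2<\infty$ precisely when $2\sigma>1$. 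This gives $\calR(\omega\pm\I 0)\in\mathcal L(\ell^2_\sigma,\ell^2_{-\sigma})$ with norm controlled by $C(\omega)$.

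For the actual convergence \eqref{esk}, I would use that the map $\omega\mapsto\theta(\omega)$ is biholomorphic $\Xi\to\Sigma$ and extends continuously to $\omega\pm\I 0\mapsto\theta_\pm(\omega)$ for $\omega\in(0,4)$; moreover the Jost solutions $f^\pm_n(\theta)$ and the Wronskian $W(\theta)$ depend continuously on $\theta$ in $\overline\Sigma$ away from the edges (this continuity up to the spectrum being exactly the content of the successive-approximation construction of $h^\pm_n$ in Lemma~\ref{newl}, applied on $\overline\Sigma_\delta$). Hence as $\varepsilon\to 0+$ the kernels $[\calR(\omega\pm\I\varepsilon)]_{n,k}\to[\calR(\omega\pm\I 0)]_{n,k}$ entrywise, with a uniform-in-$\varepsilon$ dominating bound of the form $C(\omega)\E^{\im\theta(\omega)|n-k|/2}$ (shrinking $\varepsilon$ so $\im\theta(\omega\pm\I\varepsilon)$ stays below, say, half its limiting value). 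Dominated convergence in the Hilbert--Schmidt norm of the weighted kernels then upgrades the entrywise convergence to convergence in $\mathcal L(\ell^2_\sigma,\ell^2_{-\sigma})$.

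I expect the main obstacle to be the bookkeeping around the uniformity of the bounds as $\varepsilon\to 0+$: one must make sure the constant $C(\omega)$ in \eqref{De0} and the lower bound on $|W(\theta)|$ can be chosen uniformly for $\theta=\theta(\omega\pm\I\varepsilon)$ ranging over a small neighbourhood in $\overline\Sigma$ of the limiting point $\theta_\pm(\omega)$, rather than just for each fixed $\theta$. Since $\omega$ is fixed in the open interval $(0,4)$, the relevant $\theta$'s stay in some $\overline\Sigma_\delta$, so Lemma~\ref{newl}(i) and the corollary deliver exactly such uniform bounds, and the nonvanishing of the continuous function $W$ on the corresponding compact set gives the uniform lower bound; this makes the obstacle essentially routine. (If one wants convergence uniform for $\omega$ in compact subsets of $(0,4)$, the same argument works verbatim with $\delta$ chosen for the compact set.)
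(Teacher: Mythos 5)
Your proof is correct and follows essentially the same route as the paper: pointwise convergence of the resolvent kernels $[\calR(\omega\pm\I\ve)]_{n,k}$, the uniform bound $C(\omega)$ coming from \eqref{De0} and the nonvanishing of $W$, and dominated convergence applied to the weighted Hilbert--Schmidt norm, which dominates the $\mathcal L(\ell^2_\sigma,\ell^2_{-\sigma})$ norm precisely when $2\sigma>1$. One small imprecision: for $\omega\in(0,4)$ the boundary values $\theta_\pm(\omega)$ are real, so $\im\theta_\pm(\omega)=0$ and the limiting Jost solutions are merely bounded rather than exponentially decaying; since you only use $\E^{\im\theta\,(n-k)}\le 1$ for $n\ge k$ and $\im\theta\le 0$, this does not affect the argument.
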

\begin{proof}
For any $\omega\in (0,4)$ and any $n,k\in\Z$, there exist the pointwise limit
\[
[\calR(\omega\pm \I\ve)]_{n,k}\to[\calR(\omega\pm \I 0)]_{n,k},\quad \ve\to 0.
\]
Moreover, the bound \eqref{De0} implies that $|[\calR(\omega\pm \I\ve)]_{n,k}|\le C(\omega)$.
Hence, the Hilbert--Schmidt norm of the difference
$\calR(\omega\pm \I\ve)-\calR(\omega\pm \I 0)$
converges to zero in $B(\sigma,-\sigma)$ with $\sigma>1/2$ by the Lebesgue dominated convergence theorem.
\end{proof}
\begin{corollary}\label{J-rep}
For any $\omega\in (0,4)$ and any fixed $\sigma>1/2$, the operators
$\calR(\omega\pm i0):\ell^2_\sigma\to \ell^2_{-\sigma}$
have integral kernels given by
\begin{equation}\label{RJ-rep}
[\calR(\omega\pm i0)]_{n,k} = \frac{1}{W(\theta_\pm)}\left\{ \begin{array}{cc}
f_n^+(\theta_\pm) f_k^-(\theta_\pm) \;\; \mbox{for} \;\; n \ge k \\\\
f_k^+(\theta_\pm) f_n^-(\theta_\pm) \;\; \mbox{for} \;\; n\le k \end{array} \right.
\end{equation}
where $\theta_+$, and $\theta_-=-\theta_+$
are defined by \eqref{thetadef}.
\end{corollary}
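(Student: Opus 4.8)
The plan is to obtain \eqref{RJ-rep} by passing to the limit $\ve\to 0+$ in the explicit resolvent formula \eqref{RJ1-rep}. For $\ve>0$ the spectral parameter $\omega\pm\I\ve$ lies in $\Xi$, so \eqref{RJ1-rep} expresses $[\calR(\omega\pm\I\ve)]_{n,k}$ in terms of the Jost solutions $f^\pm_n(\theta(\omega\pm\I\ve))$ and the Wronskian $W(\theta(\omega\pm\I\ve))$. On the left-hand side, Lemma~\ref{BV} gives $\calR(\omega\pm\I\ve)\to\calR(\omega\pm\I0)$ in $\mathcal L(\ell^2_\sigma,\ell^2_{-\sigma})$; applying both sides to $\delta_k\in\ell^2_\sigma$ and pairing with $\delta_n\in\ell^2_\sigma=(\ell^2_{-\sigma})^*$ shows $[\calR(\omega\pm\I\ve)]_{n,k}\to[\calR(\omega\pm\I0)]_{n,k}$ for every $n,k\in\Z$ (this pointwise convergence is also recorded explicitly in the proof of Lemma~\ref{BV}). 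It therefore only remains to compute the limit of the right-hand side of \eqref{RJ1-rep}.

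For that, recall that $\omega\mapsto\theta(\omega)$ is the continuous inverse of the biholomorphism $\theta\mapsto 2-2\cos\theta$, so $\theta(\omega\pm\I\ve)\to\theta_\pm$ as $\ve\to0+$; moreover, since $\omega\in(0,4)$ stays away from the band edges, there is a fixed $\delta\in(0,\sqrt2)$ with $\theta(\omega\pm\I\ve)\in\overline\Sigma_\delta$ for all sufficiently small $\ve\ge0$. On $\overline\Sigma_\delta$ the Neumann series solving the integral equation \eqref{green} for $h_n(z)=h^+_n(\theta)$ converges uniformly — this is precisely the bound $|G(n,m,z)|\le C(\delta)|q_m|$ with $q\in\ell^1$ used in the proof of Lemma~\ref{newl}(i) — and its terms depend continuously on $\theta$; hence each $h^\pm_n$, and with it $f^\pm_n(\theta)=\E^{\mp\I n\theta}h^\pm_n(\theta)$ and the Wronskian $W(\theta)$, is continuous on $\overline\Sigma_\delta$. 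Since $W(\theta)$ does not vanish for $\omega\in(0,4)$ we have $W(\theta_\pm)\ne0$, so $1/W$ is continuous at $\theta_\pm$ as well, and passing to the limit in \eqref{RJ1-rep} yields exactly \eqref{RJ-rep}. As finitely supported sequences are dense in $\ell^2_\sigma$, an operator in $\mathcal L(\ell^2_\sigma,\ell^2_{-\sigma})$ is determined by its matrix elements, so this identifies $\calR(\omega\pm\I0)$ with the integral operator having kernel \eqref{RJ-rep}; as a consistency check, by \eqref{De0} (note $\im\theta_\pm=0$) this kernel is bounded, hence Hilbert--Schmidt from $\ell^2_\sigma$ to $\ell^2_{-\sigma}$ when $\sigma>1/2$, in agreement with Lemma~\ref{BV}.

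Finally, the relation $\theta_-=-\theta_+$ follows directly from \eqref{thetadef}: both $\theta_+$ and $\theta_-$ solve $2-2\cos\theta=\omega$ and lie, respectively, in $[-\pi,0]$ and $[0,\pi]$, which forces them to be negatives of one another. The only point in the argument that is not entirely soft is the boundary continuity of the Jost solutions up to $\theta_\pm$, but this is already built into the successive-approximation construction underlying Lemma~\ref{newl}(i); everything else is routine bookkeeping.
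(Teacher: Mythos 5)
Your argument is correct and coincides with what the paper intends: the corollary is stated without proof precisely because it is the combination of the explicit formula \eqref{RJ1-rep}, the boundary continuity of the Jost solutions and nonvanishing of $W$ away from the band edges, and the limiting absorption principle of Lemma~\ref{BV} (whose proof already records the pointwise kernel convergence you use). Your added details on the uniform convergence of the Neumann series on $\overline\Sigma_\delta$ and the identification of the operator by its matrix elements are exactly the right justifications.
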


At the end of this section we discuss an alternative definition of resonances.
\begin{definition}
For $\omega\in\{0,4\}$
any nonzero solution $u\in\ell^{\infty}(\Z)$  of the equation  $H u=\omega u$
is called a resonance function,
and in this case the point $\omega$  is called a resonance.
\end{definition}

\begin{lemma}\label{W0}
Let $q\in\ell^1_1$. Then $\omega=0$ (or~$\omega=4$) is a resonance
if and only if $W(0)=0$ (or $W(\pi)=0$).
\end{lemma}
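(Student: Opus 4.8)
The plan is to translate a resonance at $\omega\in\{0,4\}$ into a statement about the boundary behavior of the Jost solutions at the corresponding spectral endpoints — namely $\theta=0$ for $\omega=0$ and $\theta=\pm\pi$ for $\omega=4$ — and to exploit the classical fact that the Wronskian $W(u,v)_n:=u_nv_{n+1}-u_{n+1}v_n$ of any two solutions of $Hu=\omega u$ is independent of $n$ (a one-line consequence of the difference equation $-u_{n+1}+(2+q_n-\omega)u_n-u_{n-1}=0$). In particular the number $W(\theta)$ from \eqref{alg2} is this common value of $W(f^+(\theta),f^-(\theta))_n$. I would carry out the argument for $\omega=0$; the case $\omega=4$ is identical after replacing $\theta=0$ by $\theta=\pi$, noting that $f^\pm_n(\pi)=(-1)^nh^\pm_n(\pi)$ with $h^\pm_n(\pi)\to1$ at the respective infinity, so that the constant solution is replaced by its $(-1)^n$-modulate and the product $f^\pm_n(\pi)f^\pm_{n+1}(\pi)$ tends to $-1$ instead of $1$.

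First I would record the relevant asymptotics. By Lemma~\ref{newl}(ii) with $s=0$ (using $q\in\ell^1_1$), the functions $f^\pm(0)$ are well-defined solutions of $Hu=0$ with $|f^+_n(0)|\le C\max(-n,1)$ and $|f^-_n(0)|\le C\max(n,1)$; moreover setting $z=1$ in \eqref{green} and using $\sum_m|m|\,|q_m|<\infty$ gives $f^+_n(0)\to1$ as $n\to+\infty$ and $f^-_n(0)\to1$ as $n\to-\infty$. Thus $f^+(0)$ is nonzero, bounded at $+\infty$, and at most linearly growing at $-\infty$, and symmetrically for $f^-(0)$.

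The ``if'' direction is then immediate: if $W(0)=0$, then $f^+(0)$ and $f^-(0)$ are linearly dependent, say $f^-(0)=c\,f^+(0)$ with $c\neq0$, whence $f^+_n(0)\to1$ as $n\to+\infty$ and $f^+_n(0)=c^{-1}f^-_n(0)\to c^{-1}$ as $n\to-\infty$; so $f^+(0)\in\ell^\infty\setminus\{0\}$ is a resonance function and $\omega=0$ is a resonance. For the converse I would argue by contraposition: assuming $W(0)\neq0$, so that $\{f^+(0),f^-(0)\}$ is a basis of the two-dimensional solution space, I show no nonzero $\ell^\infty$ solution exists. The key point is that the solution independent of $f^+(0)$ grows \emph{exactly} linearly at $+\infty$: choosing $g$ with $W(f^+(0),g)_n\equiv1$ and dividing by $f^+_n(0)f^+_{n+1}(0)$ (nonzero for large $n$) gives
\[
\frac{g_{n+1}}{f^+_{n+1}(0)}-\frac{g_n}{f^+_n(0)}=\frac{1}{f^+_n(0)\,f^+_{n+1}(0)}\longrightarrow1,\qquad n\to+\infty,
\]
hence $g_n/f^+_n(0)\sim n$, i.e.\ $g_n\sim n$. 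Writing $f^-(0)=a\,f^+(0)+b\,g$ one must have $b\neq0$ (else $f^-(0)$ is proportional to $f^+(0)$, contradicting $W(0)\neq0$), so $f^-_n(0)\sim b\,n$ with $b\neq0$; the same argument run towards $-\infty$ yields $f^+_n(0)\sim b'|n|$ with $b'\neq0$. Then any $u\in\ell^\infty$ with $Hu=0$ equals $\alpha f^+(0)+\beta f^-(0)$, and $u_n\sim\alpha+\beta b\,n$ as $n\to+\infty$ forces $\beta=0$, after which $u_n=\alpha f^+_n(0)\sim\alpha b'|n|$ as $n\to-\infty$ forces $\alpha=0$; so $u\equiv0$, and $\omega=0$ is not a resonance.

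I expect the one genuinely delicate point to be the linear-growth claim — excluding a sublinear second solution — which is exactly where the $n$-independence of the Wronskian and the normalization of the Jost solution (a nonzero limit at infinity) are used; everything else is bookkeeping, including the verbatim transcription to $\omega=4$, where the right-hand side of the displayed identity tends to $-1$ but the conclusion $|g_n|\sim|n|$ is unchanged.
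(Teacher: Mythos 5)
Your proof is correct and follows essentially the same route as the paper: reduction of order via the constant Wronskian to produce a second solution growing exactly linearly at infinity, from which boundedness of a nontrivial solution is equivalent to $W=0$. You are in fact slightly more careful than the paper's terse version, since you also run the argument at $-\infty$ to rule out $f^+(0)$ itself being bounded when $W(0)\neq 0$, and you record the sign change $f^\pm_n(\pi)f^\pm_{n+1}(\pi)\to-1$ needed for the $\omega=4$ case.
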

\begin{proof}
We  consider the case $\omega=0$. In this case
$
f^\pm_n= 1+o(1),$ as $n\to\pm\infty.
$
Introduce another solution $g^+$  satisfying $W(f^+,g^+)=1$. Making the ansatz $g^+_n~=~f^+_n~v_n$,
where $v_n$ is unknown, we obtain $(v_{n+1} - v_n)f_n^+f_{n+1}^+=1$ for sufficiently large positive $n_0$.
Solving for $v$ shows
\[
g^+_n= f^+_n \sum_{j=n_0}^{n-1} \frac{1}{f^+_j f^+_{j+1}} + v_{n_0} f_n^+= n+o(n),\quad n\to+\infty.
\]
Hence $f^-_n = \alpha f^+_n + \beta g^+_n$ and there is a bounded solution if and only if $\beta = W(f^+,f^-) =0$.
\end{proof}
\section{Properties of the scattering matrix}
\label{wa-sec}
Recall that the Wiener algebra is the set of all integrable functions
whose Fourier coefficients are integrable:
\[
\mathcal{A} = \Big\{ f(\theta) = \sum_{m\in\Z} \hat{f}_m \E^{\I m \theta}\  \Big|\, \|\hat{f}\|_{\ell^1} < \infty \Big\}.
\]
We set
\begin{equation}\label{n-A}
\Vert f\Vert_{\mathcal{A}}=\Vert \hat f\Vert_{\ell^1}.
\end{equation}
In the  case $q\in\ell_1^1$ the functions $h^\pm_n$ from \eqref{Jostcut} can be represented as
\begin{equation}\label{fh}
 h^\pm_n(\theta) =
1 + \sum_{m=\pm 1}^{\pm\infty} B^\pm_{n,m} \E^{\mp\I m \theta},
\end{equation}
where (see \cite[Sect.~10.1]{tjac}) $B^\pm_{n,m}\in\R$ and
\begin{equation}\label{est3}
|B^\pm_{n,m}| \le  C^\pm_n  \sum_{k=n+\lfloor m/2 \rfloor}^{\pm\infty} |q_k|,
\end{equation}
with
\begin{equation}\label{estC}
\quad C^\pm_n\le C^\pm\quad \mbox{if}\  \pm n\geq \mp 1.
\end{equation}
The estimate \eqref{est3} implies
\begin{equation} \label{alg1}
h^\pm_n(\theta), f^\pm_n(\theta)\in \mathcal A \quad \mbox{if} \quad q\in\ell^1_1.
\end{equation}
Moreover, the Wronskian $W(\theta)$ (see \eqref{alg2}) of Jost solutions
also belongs to the Wiener algebra $\mathcal A$ if $q\in\ell^1_1$ and the same holds true for the Wronskians
$W^\pm(\theta)=W(f^\mp(\theta), f^\pm(-\theta))$.
Moreover, we have the scattering relations
\begin{equation}\label{scat-rel}
T(\theta)f^\pm_m(\theta)=R^\mp(\theta)f_m^\mp(\theta) + f_m^\mp(-\theta),\quad \theta\in [-\pi, \pi],
\end{equation}
where the quantities
\begin{equation}
T(\theta)= \frac{2\I\sin\theta}{W(\theta)},\quad R^\pm(\theta)= \pm\frac{W^\pm(\theta)}{W(\theta)},
\end{equation}
which are known as the transmission and reflection coefficients, also belong to this algebra:

\begin{theorem}\label{thm:scat}
If $q\in\ell^1_1$, then  $T(\theta)$, $R^\pm(\theta)\in \mathcal{A}$.
\end{theorem}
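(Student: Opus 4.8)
The plan is to reduce everything to the invertibility of $W(\theta)$ inside the Wiener algebra. First I record the structural facts that make this work: $\mathcal A$ is a commutative unital Banach algebra under pointwise multiplication with $\|fg\|_{\mathcal A}\le\|f\|_{\mathcal A}\|g\|_{\mathcal A}$; the numerators $2\I\sin\theta=\E^{-\I\theta}(\E^{\I\theta}-1)(\E^{\I\theta}+1)$ and $W^\pm$ lie in $\mathcal A$ (the latter noted just before the theorem), as does $W$ itself; and the scattering relations \eqref{scat-rel} give the unitarity identity $|W(\theta)|^2=4\sin^2\theta+|W^\pm(\theta)|^2$ for $\theta\in[-\pi,\pi]$, so $|W(\theta)|\ge 2|\sin\theta|$ and $|W(\theta)|\ge|W^\pm(\theta)|$ and in particular $T,R^\pm$ are a priori bounded by $1$. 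Finally, $W(\theta)\ne0$ for $\theta$ with $\omega(\theta)\in(0,4)$ (used already for the limiting absorption principle), while at the band edges $\theta\in\{0,\pm\pi\}$ Lemma~\ref{W0} says $W(0)=0$ (resp.\ $W(\pm\pi)=0$) exactly when $\omega=0$ (resp.\ $\omega=4$) is a resonance.

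In the non-resonant case the proof is immediate: $W$ has no zeros on $\mathbb T$, so Wiener's lemma yields $1/W\in\mathcal A$, and then $T=2\I\sin\theta\cdot W^{-1}$ and $R^\pm=\pm W^\pm W^{-1}$ are products of elements of $\mathcal A$.

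For the resonant case I would invoke the localization principle for $\mathcal A$ (a continuous function on $\mathbb T$ that coincides near every point with some element of $\mathcal A$ is itself in $\mathcal A$), reducing the claim to local statements near each point. Near any point with $W\ne0$ one argues as in the non-resonant case after multiplying $W$ by a smooth cutoff and completing it to a nowhere-vanishing element of $\mathcal A$ (the local form of Wiener's lemma). Near a resonant edge, say $\theta=0$, the key observation is that the zero of $W$ there is automatically simple: the bound $|W(\theta)|\ge 2|\sin\theta|$ forces $|W(\theta)|\ge c|\theta|$, so $W$ cannot vanish to higher order. One then writes $W(\theta)=(\E^{\I\theta}-1)\tilde W(\theta)$ with $\tilde W\in\mathcal A$, and likewise $W^\pm(\theta)=(\E^{\I\theta}-1)\tilde W^\pm(\theta)$ (these also vanish at $\theta=0$ since $W^\pm(0)=W(f^\mp(0),f^\pm(0))=\mp W(0)$); the estimate $|\tilde W(\theta)|=|W(\theta)|/|\E^{\I\theta}-1|\ge 2|\cos(\theta/2)|$ shows $\tilde W$ is nonvanishing in a neighbourhood of $0$. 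Since near $\theta=0$ the factor $\E^{-\I\theta}(\E^{\I\theta}+1)$ is a unit, there $T=\E^{-\I\theta}(\E^{\I\theta}+1)/\tilde W$ and $R^\pm=\pm\tilde W^\pm/\tilde W$, which coincide with elements of $\mathcal A$ by the local form of Wiener's lemma applied to $\tilde W$. The edge $\theta=\pm\pi$ is treated identically with $\E^{\I\theta}-1$ replaced by $\E^{\I\theta}+1$.

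The main obstacle is the factorization $W=(\E^{\I\theta}-1)\tilde W$ (and its analogues for $W^\pm$) with $\tilde W\in\mathcal A$: membership in $\mathcal A$ is not stable under dividing out a boundary zero in general, and one needs a quantitative handle on the Fourier coefficients $\hat W_m$ — concretely, that the partial sums $\sum_{j\le m}\hat W_j$ are absolutely summable. This is where the fine structure of the Jost solutions enters: one returns to the integral equation \eqref{green}, uses the representation \eqref{fh} together with the estimates \eqref{est3}--\eqref{estC} (which only involve $\lfloor m/2\rfloor$, effectively doubling the decay), and crucially exploits the cancellation $W(0)=0$ to replace the $\ell^1_1$-type bound that a naive estimate would require by the weaker partial-sum condition that is genuinely available under $q\in\ell^1_1$. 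Carrying this bookkeeping out for the specific combination $W=\E^{\I\theta}h^+_0h^-_1-\E^{-\I\theta}h^+_1h^-_0$ (and for $W^\pm$) is the technical heart of the theorem; the rest is soft.
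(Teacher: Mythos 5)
Your overall architecture is the same as the paper's: treat the non-resonant case by Wiener's lemma, observe that any zero of $W$ at a band edge is at most simple (you get this from $|W(\theta)|\ge 2|\sin\theta|$, the paper from $|T|\le 1$; these are the same fact), factor out the simple zero inside $\mathcal{A}$, and invert the nonvanishing cofactor. The localization-principle packaging of the gluing is fine and is only cosmetically different from the paper's cutoff construction.

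The genuine gap is exactly where you flag it: the factorizations $W=(\E^{\I\theta}-1)\tilde W$ and $W^\pm=(\E^{\I\theta}-1)\tilde W^\pm$ with $\tilde W,\tilde W^\pm\in\mathcal{A}$ are asserted but not proved, and the route you sketch for them is unlikely to close. Knowing $W\in\mathcal{A}$ with $W(0)=0$ means the Fourier coefficients satisfy $\sum_m \hat W_m=0$; membership of $W/(\E^{\I\theta}-1)$ in $\mathcal{A}$ requires the much stronger statement that the tails $\sum_{m>j}\hat W_m$ are absolutely summable in $j$. A single global cancellation does not yield this, and a direct estimate of the tails via \eqref{est3} costs one extra moment, i.e.\ needs $q\in\ell^1_2$ rather than $\ell^1_1$ --- precisely the loss the theorem is designed to avoid. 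The paper circumvents this not by bookkeeping on $W$ itself but by first passing to the auxiliary quantities $V^\pm(\theta)=f_1^\pm(\theta)f_0^\pm(0)-f_0^\pm(\theta)f_1^\pm(0)$, which vanish at $\theta=0$ \emph{identically} (resonance or not) and involve only one of the two Jost solutions; for these the factorization $V^\pm=(1-\E^{\I\theta})\Psi^\pm$ with $\Psi^\pm\in\mathcal{A}$ under $q\in\ell^1_1$ is a nontrivial result imported from \cite{emt} (Lemma 4.1, formulas (4.12)--(4.14)), resting on the structure of the transformation-operator coefficients rather than on size estimates alone. The resonance condition $W(0)=0$ then enters only through the elementary identity
\begin{equation*}
W(\theta)=f_0^+(\theta)f_0^-(\theta)\left(\frac{V^-(\theta)}{f_0^-(0)f_0^-(\theta)}-\frac{V^+(\theta)}{f_0^+(0)f_0^+(\theta)}\right),
\end{equation*}
valid in the case $f_0^+(0)f_0^-(0)\neq 0$ (with a companion identity when instead $f_1^+(0)f_1^-(0)\neq 0$; one of the two always holds since a nonzero solution cannot vanish at two consecutive points). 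Without this reduction --- or an equivalent substitute supplying the summability of the partial sums of $\hat W$ under $q\in\ell^1_1$ --- the proof is incomplete at its decisive step.
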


\begin{proof} The Wronskian $W(\theta)$ can vanish only at the edges of continuous spectra,
i.e. when $\theta=0,\pm\pi$, which correspond to the resonant cases (see Lemma \ref{W0} below).
Remind that we identify points $\pi$ and $-\pi$, considering Jost solutions,
Wronskians and scattering data as functions on the unit circle.
Thus it is sufficient to consider the points $0$ and $\pi$.
Since $|T(\theta)|\leq 1$  as $\theta\in[-\pi, \pi]$ then the zeros of the Wronskian at points $0,\pi$
can be at most of first order.
Since $W(\theta) \in \mathcal{A}$ by \eqref{alg1}, then in the  case $W(0)W(\pi)\neq 0$
we obtain $W(\theta)^{-1} \in \mathcal{A}$ by Wiener's lemma. Therefore, $T, R^\pm \in \mathcal A$.

If $W(0)W(\pi)=0$ we need to work a bit harder. Suppose, for example, $W(0)=0$. In \cite{emt},
Lemma 4.1, formulas (4.12)--(4.14), the following representation is obtained
\begin{equation}\label{alg6}
 V^\pm(\theta):=f_1^\pm(\theta)f_0^\pm(0) - f^\pm_0(\theta)f^\pm_1(0)=(1 - \E^{\I\theta})\Psi^\pm(\theta),
\end{equation}
where
\begin{equation}\label{alg8}
\Psi^\pm(\theta)=\sum_{l=\frac{1\pm 1}{2}}^{\pm \infty} g_m^\pm\E^{\mp\I m\theta},
\quad \mbox{with}\quad g^\pm\in\ell^1(\mathbb Z_\pm)\quad \mbox{if}\ q\in\ell_1^1.
\end{equation}
In other words, $\Psi^\pm(\theta)\in\mathcal A$.
Since
\begin{equation}\label{alg4}
W(0)= f^+_0(0) f^-_{1}(0) - f^+_{1}(0) f^-_0(0)=0
\end{equation}
we have two possible combinations (since the solutions $f^\pm_m(0)$ cannot vanish at two
consecutive points): (a)\
$f^+_0(0)f^-_0(0)\neq 0$ and (b)\ $f^+_1(0)f^-_1(0)\neq 0$. Consider the case (a).
By \eqref{alg2}, \eqref{alg6}, and \eqref{alg4} we get
\begin{align*}
W(\theta) &=f_0^+(\theta)f_0^-(\theta)\left(\frac{V^-(\theta)}{f_0^-(0)f_0^-(\theta)}-\frac{V^+(\theta)}{f_0^+(0)f_0^+(\theta)}
\right)=\\
&=(1-\E^{\I\theta})\left( \frac{f^+_0(\theta)}{f^-_0(0)}
\Psi^-(\theta)-\frac{f^-_0(\theta)}{f^+_0(0)}\Psi^+(\theta)\right)=(1-\E^{\I\theta})\Phi(\theta),
\end{align*}
where $\Phi(\theta)\in\mathcal A$ by \eqref{alg8} and \eqref{alg1}. We observe that if $W(\pi)=0$ then
$\Phi(\theta)\neq 0$ for $\theta\in (-\pi,\pi)$  and if
$W(\pi)\neq 0$ then $\Phi(\theta)\neq 0$ for $\theta\in[-\pi,\pi]$.
The same result follows in a similar fashion in case (b).
Since equality $W(0)=0$ implies $W^\pm(0)=0$ then  we can also get similarly
$W^\pm(\theta)=(1 -\E^{\I\theta})\Phi^\pm(\theta)$ with $\Phi^\pm(\theta)\in\mathcal A$.

Analogously, $W(\pi)=0$ implies $W(\theta)= (1+\E^{\I\theta}) \tilde{\Phi}(\theta)$,
$W^\pm(\theta)= (1+\E^{\I\theta}) \tilde{\Phi}^\pm(\theta)$ with
$\tilde{\Phi},\tilde{\Phi}^\pm\in \mathcal A$ and $\tilde\Phi(\theta)\neq 0$ for $\theta\in[-\pi,\pi]$ if $W(0)\neq 0$.
Thus if $W$ vanishes at only one edge of spectrum, this finishes the proof. If $W$ vanishes at both edges, then
we can use a smooth cut-off function to combine both representations into
$W(\theta)= (1-\E^{2\I\theta}) \breve{\Phi}(\theta)$ (respectively,
$W^\pm(\theta)= (1-\E^{2\I\theta}) \breve{\Phi}^\pm(\theta)$) with $\breve{\Phi},\breve{\Phi}^\pm\in \mathcal A$
and $\breve{\Phi}(\theta)\neq 0$ for $\theta\in[-\pi,\pi]$.
\end{proof}
\section{Dispersive decay in the resonant case}
\label{ll-sec}
We begin with a small variant of the van der Corput lemma.
\begin{lemma}\label{lem:vC}
Consider the oscillatory integral
\begin{equation}
I(t) = \int_a^b \E^{\I t \phi(\theta)} f(\theta) d\theta, \qquad -\pi \le a < b \le \pi,
\end{equation}
where $\phi(\theta)$ is real-valued.
If $\min\limits_{\theta\in[a,b]}|\phi^{(s)}(\theta)|=m_s>0$ for some $s\ge 2$ and $f\in\mathcal{A}$, then
\begin{equation}
|I(t)| \le \frac{C_s \|\hat{f}\|_{\ell^1}}{(m_s t)^{1/s}}, \quad t\ge 1,
\end{equation}
where $C_s$ is a universal constant.
\end{lemma}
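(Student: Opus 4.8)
The plan is to reduce the statement to the classical van der Corput lemma with amplitude identically $1$ (the version in which, for $s\ge 2$, no monotonicity is needed and the constant depends only on $s$; see \cite[p.~334]{St}), by expanding the amplitude $f$ into its Fourier series and absorbing each Fourier mode into the phase.

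Concretely, write $f(\theta)=\sum_{m\in\Z}\hat f_m\,\E^{\I m\theta}$. Since $\|\hat f\|_{\ell^1}<\infty$, this series converges absolutely and uniformly on $[a,b]$, so summation and integration may be interchanged:
\[
I(t)=\sum_{m\in\Z}\hat f_m\int_a^b \E^{\I t\phi(\theta)}\E^{\I m\theta}\,d\theta
=\sum_{m\in\Z}\hat f_m\int_a^b \E^{\I t\psi_m(\theta)}\,d\theta,
\qquad \psi_m(\theta):=\phi(\theta)+\tfrac{m}{t}\,\theta .
\]
Each $\psi_m$ is real-valued, and the key point is that, because $s\ge 2$, the added linear term does not affect the $s$-th derivative: $\psi_m^{(s)}=\phi^{(s)}$, hence $\min_{\theta\in[a,b]}|\psi_m^{(s)}(\theta)|=m_s>0$ for every $m$. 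Applying the classical van der Corput estimate to each term (after rescaling $\phi\mapsto\phi/m_s$, $t\mapsto m_s t$, which is legitimate since the van der Corput constant is independent of the interval $[a,b]\subseteq[-\pi,\pi]$ and of the phase) gives
\[
\Big|\int_a^b \E^{\I t\psi_m(\theta)}\,d\theta\Big|\le \frac{C_s}{(m_s t)^{1/s}},\qquad t\ge 1,
\]
with $C_s$ a universal constant. Summing over $m$ and using $\sum_{m\in\Z}|\hat f_m|=\|\hat f\|_{\ell^1}$ yields $|I(t)|\le C_s\|\hat f\|_{\ell^1}(m_s t)^{-1/s}$, as claimed.

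I do not expect any real obstacle here; the only point requiring care is the hypothesis $s\ge 2$, which is precisely what makes the trick work. For $s=1$ the perturbation $\tfrac{m}{t}\theta$ could destroy the lower bound on $|\phi'|$ (and one would moreover need monotonicity of $\phi'+\tfrac{m}{t}$), so the statement genuinely needs $s\ge 2$; under that hypothesis the linear modes are harmless and the Wiener-algebra norm is exactly the right quantity to control the amplitude.
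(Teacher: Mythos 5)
Your proof is correct and is essentially identical to the paper's: both expand $f$ in its Fourier series, absorb each mode $\E^{\I m\theta}$ into the phase as a linear term $\tfrac{m}{t}\theta$ (harmless for $s\ge 2$ since it leaves $\phi^{(s)}$ unchanged), apply the classical van der Corput bound uniformly in $m$, and sum using $\|\hat f\|_{\ell^1}$. Your added remarks on the interchange of sum and integral and on why $s\ge 2$ is essential are correct but the argument is the same.
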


\begin{proof}
We rewrite
\[
I(t) =  \int_a^b \E^{\I t \phi(\theta)} \sum_{p\in\Z} \hat{f}_p\E^{\I p \theta} d\theta = \sum_{p\in\Z} \hat{f}_p I_{p/t}(t), \quad I_v(t)= \int_a^b \E^{\I t (\phi(\theta) + v \theta)} d\theta.
\]
By the van der Corput lemma \cite[page 332]{St} we have $|I_v(t)| \le C_s (m_s t)^{-1/s}$, where $C_s$ is a universal constant (independent of $v$) and the claim follows.
\end{proof}

\begin{remark}
The above lemma is usually found for the case when $f$ is absolutely continuous in the literature (cf.\ \cite[page 333]{St}) --- in fact, the proof immediately extends to
functions of bounded variation. However, by the Riemann--Lebesgue lemma the Fourier coefficients of an absolutely continuous function must satisfy $\hat{f}_m = o(m^{-1})$
(for functions of bounded variation one has $O(m^{-1})$) and considering lacunary Fourier coefficients one obtains
an element in the Wiener algebra which is not absolutely continuous (of bounded variation). Conversely, since the Fourier coefficients of an integrable function
can have arbitrary slow decay, there are absolutely continuous functions which are not in the Wiener algebra.
Finally, note that for continuous $f$ the decay can be arbitrary slow.
\end{remark}

Now we come to our main result in this section.

\begin {theorem}\label{end1}
 Let $q\in\ell^1_1$. Then the asymptotics \eqref{fullp} and \eqref{as1-new} hold, i.e.,
\begin{equation}\label{Schr-as1}
\Vert \E^{-\I tH}P_c\Vert_{\ell^1\to \ell^\infty}=\mathcal{O}(t^{-1/3}),\quad t\to\infty,
\end{equation}
\begin{equation}\label{Schr-as2}
\Vert \E^{-\I tH} P_c\Vert_{\ell^2_\sigma\to \ell^2_{-\sigma}}=\mathcal{O}(t^{-1/2}),\quad t\to\infty,\quad\sigma>1/2.
\end{equation}
\end{theorem}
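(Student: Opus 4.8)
The strategy is to use the stationary phase / resolvent approach. Write the perturbed propagator restricted to the continuous spectrum as a contour integral of the resolvent difference $\calR(\omega+\I0)-\calR(\omega-\I0)$ over $[0,4]$, exactly as in \eqref{propag} for the free case, and then insert the Jost-function representation \eqref{RJ-rep} of the resolvent. Changing variables to $\theta$ via $\omega=2-2\cos\theta$ and using the scattering relation \eqref{scat-rel} together with $T(\theta)f^\pm_m(\theta)=R^\mp(\theta)f^\mp_m(\theta)+f^\mp_m(-\theta)$, one expresses the kernel $[\E^{-\I tH}P_c]_{n,k}$ (for, say, $n\ge k$) as a finite sum of oscillatory integrals of the form
\[
\int_{-\pi}^{\pi}\E^{-\I t(2-2\cos\theta)}\,\E^{-\I\theta(n-k)}\,T(\theta)h^+_n(\theta)h^-_k(\theta)\,d\theta
\]
and a similar one with the reflection coefficient and a phase $\E^{-\I\theta(n+k)}$ (plus the factor $\E^{\I\theta(\cdots)}$ absorbed from $f=\E^{\mp\I n\theta}h$). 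The point is that after the substitution $z=\E^{-\I\theta}$ all the amplitude factors $h^\pm_n$, $T$, $R^\pm$ lie in the Wiener algebra $\mathcal A$ by \eqref{alg1} and Theorem~\ref{thm:scat}, with Wiener norms bounded uniformly in $n,k$ (this is where \eqref{est3}--\eqref{estC} are used: the bounds on $B^\pm_{n,m}$ are summable uniformly in $n$ on the correct half-line).

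\emph{Step for \eqref{Schr-as1}.} The phase is $\phi_v(\theta)=(2-2\cos\theta)+v\theta$ with $v=(n-k)/t$ or $v=(n+k)/t$, exactly the phase analyzed in Proposition~\ref{free-as}. As there, split $[-\pi,\pi]$ into a bounded number of subintervals on each of which either $|\phi_v''|\ge\sqrt2$ or $|\phi_v'''|\ge\sqrt2$, the splitting being independent of $v$. On each subinterval apply the generalized van der Corput lemma (Lemma~\ref{lem:vC}) to the integral with amplitude $f(\theta)=T(\theta)h^+_n(\theta)h^-_k(\theta)$ (or with $R^\pm$); this gives a bound $C t^{-1/3}\|\hat f\|_{\ell^1}$, and since the Wiener norms of $h^\pm_n$, $T$, $R^\pm$ are bounded uniformly in $n,k$ and $\mathcal A$ is a Banach algebra, $\|\hat f\|_{\ell^1}\le C$ uniformly. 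Summing the finitely many pieces gives $\sup_{n,k}|[\E^{-\I tH}P_c]_{n,k}|=\mathcal O(t^{-1/3})$, which is \eqref{Schr-as1}.

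\emph{Step for \eqref{Schr-as2}.} Here I would mimic Step~ii) of Proposition~\ref{free-as}: split the $\theta$-integral into the part near the degenerate stationary point $\theta=-\pi/2$ (the set $|\theta+\pi/2|\le\pi/6$, call the resulting operator $\tilde K(t)$ after inserting the scattering amplitudes) and the complementary part. On the complement the second derivative $\phi_v''=2\cos\theta$ is bounded below, so Lemma~\ref{lem:vC} with $s=2$ gives a uniform $\mathcal O(t^{-1/2})$ bound on the kernel — hence the $\ell^1\to\ell^\infty$ and a fortiori the $\ell^2_\sigma\to\ell^2_{-\sigma}$ bound for that piece. For the piece near $\theta=-\pi/2$, the extra weights $(1+|n|)^{-\sigma}(1+|k|)^{-\sigma}$ must be used: one repeats the dyadic decomposition $t_j=t^{-(1/2)^j}$ of the interval around $\theta=-\pi/2$ and the case analysis $|v-2|\lessgtr t_jt^\ve$ of Lemma~\ref{Lem:K}, with the only change that each $[K_j(t)]_{n,k}$ now carries the amplitude $T(\theta)h^+_n(\theta)h^-_k(\theta)$ (or the $R^\pm$ analog). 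In the "small $|v-2|$" regime one uses the van der Corput bound with amplitude in $\mathcal A$ (uniform Wiener norm) and the combinatorial estimate \eqref{GGG} on the number of lattice points in $T_j$; in the "large $|v-2|$" regime one integrates by parts, but now the amplitude is only in $\mathcal A$ rather than smooth. This is handled by writing the amplitude as its Fourier series $\sum_p\hat f_p\E^{\I p\theta}$, integrating by parts in each term (the extra $\E^{\I p\theta}$ merely shifts $v\mapsto v+p/t$ in the phase but, crucially, does not change the lower bound $|v-2|\ge t_jt^\ve$ on the relevant denominator up to constants once $j\le N$ and $|p|$ is controlled, or else is absorbed into the $\ell^1$ norm of $\hat f$), and summing. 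Wronskian non-vanishing on $(0,4)$ (so $T$ is genuinely in $\mathcal A$ there) plus Theorem~\ref{thm:scat} at the edges make all amplitudes legitimate.

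\emph{Main obstacle.} The genuinely new difficulty compared with the free case is that the amplitudes $T(\theta)h^+_n(\theta)h^-_k(\theta)$ are no longer smooth — only in the Wiener algebra — and yet one still needs the integration-by-parts estimate \eqref{denom}--\eqref{dop14} in the non-stationary regime of Lemma~\ref{Lem:K}. The resolution is precisely the generalized van der Corput Lemma~\ref{lem:vC} for the oscillatory estimates, together with a Fourier-series bookkeeping for the integration-by-parts estimates: decompose the amplitude into $\sum_p\hat f_p\E^{\I p\theta}$, absorb each exponential into a shift of the linear part of the phase, and use $\|\hat f\|_{\ell^1}<\infty$ to sum. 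One must check that these shifts $v\mapsto v+p/t$ do not destroy the lower bounds on the phase derivatives used in \eqref{dop15}--\eqref{dop14}; splitting the sum over $p$ into $|p|\le \epsilon t$ and $|p|>\epsilon t$ (the latter contributing a tail of $\|\hat f\|_{\ell^1}$ that is itself $o(1)$, or bounded by applying the non-stationary phase bound with the shifted $v$) makes this routine. Away from the band edges everything is as in the free case; near the edges the Wiener-algebra input of Theorem~\ref{thm:scat} is exactly what keeps the argument running, which is the conceptual novelty of the paper.
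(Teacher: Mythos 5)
Your Step i) is essentially the paper's proof of \eqref{Schr-as1}: the same kernel representation, the same use of the scattering relations to move $h^\pm_n$ onto the half-lines where \eqref{est10} applies, the same splitting into finitely many intervals where $|\phi_v''|\ge\sqrt2$ or $|\phi_v'''|\ge\sqrt2$, and Lemma~\ref{lem:vC} with uniform Wiener-norm bounds. For Step ii) you follow the paper's outline (excision of the degenerate point, dyadic decomposition $t_j=t^{-(1/2)^j}$, the two regimes of Lemma~\ref{Lem:K}), but with one minor error and one genuine gap. The minor error: $\phi_v''(\theta)=2\cos\theta$ also vanishes at $\theta=+\pi/2$, so it is not bounded below on the complement of $\{|\theta+\pi/2|\le\pi/6\}$ alone; you must excise a neighbourhood of $+\pi/2$ as well (the paper's set $J$) and treat that piece separately, e.g.\ by non-stationary phase, since $\phi_v'=2\sin\theta+v\ge 2\sin(\pi/3)$ there for $v\ge 0$.

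The gap is in the non-stationary regime $|v-2|\ge t_jt^{\ve}$. You propose to integrate by parts after expanding the amplitude $Y_{n,k}$ into its Fourier series and absorbing $\E^{\I p\theta}$ into a shift $v\mapsto v-p/t$. But the modes with $|p|\gtrsim t|v-2|$ (for $j=1$ this already means $|p|\gtrsim t^{1/2+\ve}$) can move the effective parameter onto or past the degenerate value $2$; for those modes integration by parts is unavailable and the best bound on the annulus $t_j\le|\theta+\pi/2|\le t_{j+1}$ is the van der Corput bound $C(tt_j)^{-1/2}$. Their total contribution is then $(tt_j)^{-1/2}\sum_{|p|>c\,t|v-2|}|\hat Y_{n,k,p}|$, and under $q\in\ell^1_1$ this tail tends to zero with no rate (and not uniformly in $n,k$); to recover the needed bound $Ct^{-1}t_j^{-1}$ one would require quantitative decay of the Fourier coefficients that $q\in\ell^1_1$ does not supply. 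The workaround is also unnecessary: the point you are missing is Lemma~\ref{newl}(i) with $s=1$, which for $q\in\ell^1_1$ gives differentiability of $h^\pm_n$ on $\overline\Sigma_\delta$, i.e.\ away from the band edges $\theta=0,\pm\pi$. Since $J$ is bounded away from the band edges and $W\ne0$ there, one gets $|\partial_\theta Y_{n,k}(\theta)|\le C$ uniformly in $n,k$ after the case analysis (a)--(c) (cf.\ \eqref{dTR-est} and \eqref{phi-c}), so the integration by parts of Lemma~\ref{Lem:K} goes through verbatim with a genuinely $C^1$ amplitude. The Wiener-algebra input is needed only for the oscillatory (van der Corput) estimates near the band edges, where Lemma~\ref{lem:vC} applies — not for the integration by parts on $J$.
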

\begin{proof}
{\it Step i)}
We apply the spectral representation
\begin{equation}\label{spR}
   \E^{-\I tH}P_c
   =\frac 1{2\pi \I}\int\limits_{[0, 4]}
   \E^{-\I t\omega}( \calR(\omega+\I 0)- \calR(\omega-\I 0))\,d\omega.
\end{equation}
Expressing the kernel of the resolvent in terms of the Jost solutions (cf.\ \cite[(1.99)]{tjac}),
the kernel of $\E^{-\I t H} P_c$ reads:
\begin{align}\label{HP1}
\left[ \E^{-\I t H}P_c \right]_{n,k} & =  \frac{1}{2 \pi \I} \int_0^4 \E^{-\I t \omega}
\left[ \frac{f_k^+(\theta_+) f_n^-(\theta_+)}{W(\theta_+)}
- \frac{f_k^+(\theta_-) f_n^-(\theta_-)}{W(\theta_-)} \right] d\omega \\ \nonumber
& =  -\frac{1}{\pi \I} \int_{-\pi}^{\pi} \E^{-\I t (2 - 2\cos \theta)}
\frac{f_k^+(\theta) f_n^-(\theta)}{W(\theta)}  \sin\theta\, d\theta
\end{align}
for $n\le k$ and by symmetry $\left[ \E^{-\I t H}P_c \right]_{n,k}= \left[ \E^{-\I t H}P_c \right]_{k,n}$ for $n\ge k$.
Hence, for \eqref{Schr-as1} it suffices to prove that
\begin{equation}
\label{eq:m21}
\left[ \E^{-\I tH}P_c\right]_{n,k} =\mathcal{O}( t^{-1/3}),\quad t\to\infty.
\end{equation}
independent of $n,k$. We suppose $n\le k$ for notational simplicity.
Then
\[
\left[ \E^{-\I t H}P_c \right]_{n,k} =  \frac{1}{2\pi} \int_{-\pi}^{\pi}
\E^{-\I t\phi_v(\theta)} h_k^+(\theta)h_n^-(\theta) T(\theta) d\theta,
\]
where $\phi_v$ is defined in \eqref{phi-def} with $v=\frac{k-n}{t}\geq 0$. We observe that the function
\begin{equation}\label{igrek}Y_{n,k}(\theta)=h_k^+(\theta)h_n^-(\theta) T(\theta)\end{equation} belongs to $\mathcal A$, moreover,
the $\ell^1$-norm of its Fourier coefficients $\hat Y_{n,k}(\cdot)$
can be estimated by a value, which does not depend on $n$ and $k$.
To this end introduce
\[
1+\sup_{\pm n >0}\sum_{m=1}^{\pm\infty }|B_{n,m}^\pm|=\tilde{C}^\pm>0.
\]
By \eqref{est3}--\eqref{estC} this supremum is finite. Then
\begin{equation}\label{est10}
\|\hat h^\pm_n(\cdot)\|_{\ell^1}\leq \tilde{C}^\pm\quad\mbox{for}\quad\pm n>0.
\end{equation}
Now consider the three possibilities (a) $n\leq k\leq 0$, (b) $0\leq n\leq k$ and (c) $n\leq 0\leq k$.
In the case (c) the bound \eqref{est10} and Theorem~\ref{thm:scat} imply
\begin{equation}\label{Phi-est}
\|\hat Y_{n,k}(\cdot)\|_{\ell^1} \leq C.
\end{equation}
In the other two cases  we use the scattering relations \eqref{scat-rel} to get the representation
\begin{equation}\label{Phi_nk}
Y_{n,k}(\theta)=\left\{\begin{array}{ll}
 h_n^-(\theta)(R^-(\theta)h_k^-(\theta)\E^{2\I k\theta} + h_k^-(-\theta)) & n\leq k\leq 0,\\[2mm]
 h_k^+(\theta)(R^+(\theta)h_n^+(\theta)\E^{-2\I n\theta} + h_n^+(-\theta)) & 0\leq n\leq k,
\end{array}\right.
\end{equation}
and again apply Theorem~\ref{thm:scat} together with \eqref{alg1} and \eqref{est10} to obtain \eqref{Phi-est}.

Now, as in the proof of \eqref{H0-as1} (see step (i) in the proof of Proposition~\ref{free-as})
we split the domain of integration into regions where
either the second or third derivative of the phase is nonzero and apply Lemma~\ref{lem:vC} together with the
estimates from Theorem~\ref{thm:scat}.
\smallskip\\
{\it Step ii)}
Set $J:=\{\theta\in[-\pi, \pi]: \left|\theta\pm\frac{\pi}{2}\right|\leq\frac{\pi}{6}\}$.
To establish \eqref{Schr-as2}
we represent $\left[ \E^{-\I t H}P_c \right]_{n,k}$ as the sum
\[
\left[ \E^{-\I t H}P_c \right]_{n,k}=[{\mathcal K}^\pm(t)]_{n,k}+[\tilde{\mathcal K}(t)]_{n,k},
\]
where
\begin{align*}
[{\mathcal K}^\pm(t)]_{n,k}&=\frac{1}{2\pi} \int_{|\theta\pm\frac{\pi}2|\le\frac{\pi}6}
\E^{-\I t\phi_v(\theta)} Y_{n,k}(\theta) d\theta,\\
[\tilde{\mathcal K}(t)]_{n,k}&=\frac{1}{2\pi} \int_{\theta\in[-\pi,\pi]\setminus J}
\E^{-\I t\phi_v(\theta)} Y_{n,k}(\theta) d\theta,
\end{align*}
and $Y_{n,k}(\theta)=h_k^+(\theta)h_n^-(\theta) T(\theta)$ as above.
Lemma \ref{lem:vC} with $s=2$ and the bound \eqref{Phi-est} imply
\[
\sup_{n,k\in\Z}|[\tilde{\mathcal K}(t)]_{n,k}|\le C t^{-1/2},\quad t\ge 1.
\]
Then
\[
\Vert \tilde{\mathcal K}(t)\Vert_{\ell^2_\sigma\to\ell^2_{-\sigma}}\le C t^{-1/2},\quad\sigma>1/2,
\quad t\ge 1.
\]
It remains to obtain the same estimate for ${\mathcal K}^\pm(t)$.
Since $W(\theta)\neq 0$ for $\theta\in J$, it follows from Lemma \ref{newl} that
\begin{equation}\label{dTR-est}
|\frac{d}{d\theta}T(\theta)|,\quad |\frac{d}{d\theta}R^\pm(\theta)|\le C,\quad \theta\in J.
\end{equation}
Furthermore, we split ${\mathcal K}^\pm(t)$ as
\[
{\mathcal K}^\pm(t)={\mathcal K}^\pm_a(t)+{\mathcal K}^\pm_b(t)+{\mathcal K}^\pm_c(t),
\]
where ${\mathcal K}^\pm_a(t)$ are
the restrictions of the operators ${\mathcal K}^\pm(t)$ to the case (a) $n\le k\le 0$ etc.
First we estimate ${\mathcal K}^\pm_c(t)$.
The bounds \eqref{dh-est} and \eqref{dTR-est} imply
\begin{equation}\label{phi-c}
|\frac{\partial}{\partial \theta}Y_{n,k}(\theta)|\le C,~~ \theta\in J,~~n\le 0\le k.
\end{equation}
Therefore, applying integration by parts, we obtain
\[
|[{\mathcal K}^-_c(t)]_{n,k}|\le C t^{-1},\quad t\ge 1,
\]
and then
\[
\Vert {\mathcal K}^-_c(t)\Vert_{\ell^2_\sigma\to\ell^2_{-\sigma}}\le C t^{-1},\quad\sigma>1/2,
\quad t\ge 1.
\]
To estimate ${\mathcal K}^+_c(t)$
we apply the general scheme of Lemma \ref{Lem:K}. In particular, to prove a bound of the type \eqref{C1}
for the integral with the additional factor $Y_{n,k}$ we use
Lemma \ref{lem:vC}. To get \eqref{denom} we use the bounds \eqref{Phi-est} and \eqref{phi-c}.
For the other estimates we repeat literally the respective estimates of Lemma \ref{Lem:K}. Thus, we obtain
\[
\Vert {\mathcal K}^+_c(t)\Vert_{\ell^2_\sigma\to\ell^2_{-\sigma}}\le C t^{-1/2},\quad\sigma>1/2,
\quad t\to\infty.
\]
Now consider the case (a). Using the first line of \eqref{Phi_nk} and the fact
\begin{equation}\label{nk}
\I(n-k)+2\I k=\I(k+n)=-\I|k+n|,\quad n\le k\le 0,
\end{equation}
we represent ${\mathcal K}^\pm_a(t)$  as
\[
{\mathcal K}^\pm_a(t)=\frac{1}{2\pi} \int_{|\theta\pm\frac{\pi}2|\le\frac{\pi}6}
\E^{-\I t\phi_v(\theta)} Y^1_{n,k}(\theta) d\theta
+\frac{1}{2\pi} \int_{|\theta\pm\frac{\pi}2|\le\frac{\pi}6}
\E^{-\I t{\tilde\phi}_v(\theta)} Y^2_{n,k}(\theta) d\theta
\]
where $Y_{n,k}^1(\theta)=h_n^-(\theta)h_k^-(-\theta)$,
$Y_{n,k}^2(\theta)=R^-(\theta)h_n^-(\theta)h_k^-(\theta)$, and
\begin{equation}\label{tphi}
\tilde\phi_v(\theta)=2-2\cos\theta+\tilde v\theta,~~{\rm with}~~ \tilde v=|n+k|/t\ge 0.
\end{equation}
Since $$
|\frac{\partial}{\partial \theta}Y_{n,k}^j(\theta)|\le C,~~ \theta\in J,~~n\le k\le 0,~~j=1,2,
$$
then ${\mathcal K}^\pm_a(t)$
can be treated similarly to ${\mathcal K}^\pm_c(t)$.

In the case (b) we have
\begin{equation}\label{nk1}
\I(n-k)-2\I n=-\I(k+n)=-\I|k+n|,\quad 0\le n\le k.
\end{equation}
and then the proof is the same as in the case (a).
\end{proof}
\section{Dispersive decay in the non-resonant case}
\label{ll2-sec}
\begin {theorem}\label{t-new}
Let $q\in\ell^1_2$. Then in the non-resonant case the asymptotics \eqref{as-new} hold, i.e.,
\begin{equation}\label{Schr-asn1}
\Vert \E^{-\I tH}P_c\Vert_{\ell^1_1\to \ell^\infty_{-1}}=\mathcal{O}(t^{-4/3}),\quad t\to\infty,
\end{equation}
\end{theorem}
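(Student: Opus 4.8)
The plan is to follow the same route as in the resonant case (Theorem~\ref{end1}) but now exploiting the extra decay $q\in\ell^1_2$, the absence of resonances, and the fact that we may weight by powers of $|n|$ and $|k|$ on both sides. As in \eqref{HP1}, the kernel of $\E^{-\I t H}P_c$ for $n\le k$ is
\[
\left[ \E^{-\I t H}P_c \right]_{n,k} =  \frac{1}{2\pi} \int_{-\pi}^{\pi}
\E^{-\I t\phi_v(\theta)}\, Y_{n,k}(\theta)\, d\theta, \qquad v=\tfrac{k-n}{t}\ge 0,
\]
with $Y_{n,k}(\theta)=h_k^+(\theta)h_n^-(\theta)T(\theta)$ and $\phi_v$ as in \eqref{phi-def}. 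To get the rate $t^{-4/3}$ instead of $t^{-1/3}$ we need \emph{two} integrations by parts near the degenerate stationary point $\theta=-\pi/2$ (where $\phi_v''=\phi_2''(-\pi/2)=0$ but $\phi_2'''(-\pi/2)=2\ne0$), which is why $q\in\ell^1_2$ is needed: by Lemma~\ref{newl}(ii) (with $s=2$, using $q\in\ell^1_{s+1}=\ell^1_3$ is \emph{not} assumed, so one uses part (i) on $\overline\Sigma_\delta$ and part (ii) for the edge behaviour as available) the functions $h^\pm_n(\theta)$ are twice differentiable with $|\partial_\theta^p h^\pm_n(\theta)|\le C\max((\mp n)|n|^{p-1},1)$, and in the non-resonant case $W(\theta)\ne0$ on all of $[-\pi,\pi]$, so $T(\theta),R^\pm(\theta)$ are twice differentiable there as well. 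Hence $\partial_\theta^p Y_{n,k}(\theta)$ picks up at most a factor $(1+|n|)(1+|k|)$ for each of $p=0,1,2$ — precisely the loss that the weights $\ell^1_1\to\ell^\infty_{-1}$ on the left-hand side absorb.

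The key steps, in order: (1) Rewrite $\big[\E^{-\I tH}P_c\big]_{n,k}/\big((1+|n|)(1+|k|)\big)$ and observe, using the scattering relations \eqref{scat-rel} exactly as in \eqref{Phi_nk}, that in all three regions (a)~$n\le k\le0$, (b)~$0\le n\le k$, (c)~$n\le0\le k$ the relevant amplitude is a product of functions $h^\pm_m$ (at arguments $\pm\theta$) and one of $T,R^\pm$; by Lemma~\ref{newl} and Theorem~\ref{thm:scat} (which now also gives $T^{-1}$-type bounds since there are no resonances) one gets
\[
\Big|\partial_\theta^p\Big(\tfrac{Y_{n,k}(\theta)}{(1+|n|)(1+|k|)}\Big)\Big|\le C,\qquad p=0,1,2,\ \theta\in[-\pi,\pi],
\]
uniformly in $n,k$. (2) Away from the degenerate point — i.e.\ on the complement of a fixed neighbourhood $|\theta+\pi/2|\le\pi/6$ (and, if $v$ is bounded away from $2$, everywhere) — $|\phi_v''|$ is bounded below, so two integrations by parts, or equivalently Lemma~\ref{lem:vC} applied after one integration by parts to the amplitude $\partial_\theta(Y_{n,k}/\phi_v')$, yields $\mathcal{O}(t^{-1})$ there, which is better than needed. (3) On the critical arc $|\theta+\pi/2|\le\pi/6$ with $v$ near $2$, change variables $\psi=\theta+\pi/2$ so that the phase is $v\psi-2\sin\psi$; one performs a van der Corput estimate of order $s=3$ (Lemma~\ref{lem:vC} with $s=3$, using $Y_{n,k}/((1+|n|)(1+|k|))\in\mathcal A$ with uniformly bounded norm, which gives the $t^{-1/3}$), \emph{then} gains an extra $t^{-1}$ by integrating by parts once before doing so — the nondegeneracy $\phi_v'''\ne0$ and the $C^2$ bound on the amplitude make $\partial_\theta(Y_{n,k}/\phi_v')$ a legitimate $\mathcal A$-amplitude for a third-derivative van der Corput estimate, yielding $\mathcal{O}(t^{-1}\cdot t^{-1/3})=\mathcal{O}(t^{-4/3})$. (4) Combine the three regions and the three cases (a)--(c), note all bounds are uniform in $n,k$, divide out the weights, and take the $\ell^1_1\to\ell^\infty_{-1}$ operator norm.

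The main obstacle is step (3): near $\theta=-\pi/2$ the phase is genuinely degenerate of order three, and one must combine an integration by parts (which produces the factor $1/\phi_v'$, singular where $\phi_v'$ vanishes, i.e.\ exactly at stationary points of $\phi_v$ that approach $-\pi/2$ as $v\to2$) with a third-derivative van der Corput estimate in a way that keeps the amplitude in the Wiener algebra with $n,k$-uniform norm. The careful bookkeeping is to split the arc further according to the size of $|v-2|$ versus powers of $t$, mirroring the dyadic decomposition $t_j=t^{-(1/2)^j}$ used in Lemma~\ref{Lem:K}: on the pieces where $|\phi_v'|$ is bounded below one integrates by parts twice; on the innermost piece containing the (possibly confluent) stationary points one uses the cubic van der Corput bound directly, and the measure of that piece is $O(t^{-1/3})$ smaller, so combined with the overall $t^{-1}$ from the outer pieces one still lands at $t^{-4/3}$. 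The derivative bounds from Lemma~\ref{newl}, together with the non-resonance hypothesis ensuring $W^{-1}\in C^2$, are exactly what make every amplitude appearing in this decomposition admissible for Lemma~\ref{lem:vC}.
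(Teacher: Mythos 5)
Your strategy does not close, and the gap is in the mechanism you propose for gaining the extra factor $t^{-1}$ over the resonant rate $t^{-1/3}$. In step (2) you claim that away from the degenerate point $\theta=-\pi/2$ two integrations by parts against the phase $\phi_v(\theta)=2-2\cos\theta+v\theta$ give $\mathcal{O}(t^{-1})$. But for $0<v<2$ the phase $\phi_v$ has two \emph{nondegenerate interior stationary points} (solutions of $2\sin\theta=-v$), which for small $v$ lie near the band edges $\theta=0,-\pi$ and hence well outside $|\theta\pm\pi/2|\le\pi/6$; there $\phi_v'$ vanishes, $Y_{n,k}/\phi_v'$ is singular, and van der Corput with $s=2$ only yields $t^{-1/2}$. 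Since for, say, $n=k=0$ one has $v=0$ and $(1+|n|)(1+|k|)=1$, a bound of order $t^{-1/2}$ is far from the required $(1+|n|)(1+|k|)t^{-4/3}$. Similarly, in step (3) the function $\partial_\theta(Y_{n,k}/\phi_v')$ is not an admissible $\mathcal{A}$-amplitude near the (possibly confluent) stationary points, and your final bookkeeping (``the measure of the innermost piece is $O(t^{-1/3})$ smaller, so combined with the $t^{-1}$ from the outer pieces one lands at $t^{-4/3}$'') is not a valid way to combine estimates: the bound for the whole integral is governed by the worst piece, and a cubic van der Corput bound on the innermost piece gives only $t^{-1/3}$. (That piece is in fact harmless for a reason you never invoke: $v$ near $2$ forces $|n-k|\approx 2t$, hence $(1+|n|)(1+|k|)\gtrsim t$, so $t^{-1/3}$ already beats the weighted target there.)

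The idea you are missing is that in the non-resonant case the gain of $t^{-1}$ comes from the vanishing of $T(\theta)=2\I\sin\theta/W(\theta)$ at the band edges $\theta=0,\pm\pi$, not from the degenerate point $\theta=-\pi/2$. The paper first symmetrizes the jump of the resolvent, using the scattering relations and the consistency relation $T\overline{R^-}+\overline{T}R^+=0$, into
\begin{equation*}
\calR(\omega+\I0)-\calR(\omega-\I0)=\frac{|T(\theta)|^2}{-2\I\sin\theta}\bigl[f_k^+(\theta)f_n^+(-\theta)+f_k^-(\theta)f_n^-(-\theta)\bigr],
\end{equation*}
and then integrates by parts \emph{once, globally over $[-\pi,\pi]$, against the fixed phase $2-2\cos\theta$}, whose derivative $2\sin\theta$ vanishes only at the band edges. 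This produces the factor $t^{-1}$ together with a $1/\sin\theta$ singularity at the edges, which is exactly cancelled because $T/\sin\theta=2\I/W\in\mathcal{A}$ when $W(0)W(\pi)\neq0$; the quantitative versions are Lemmas~\ref{T-est} and \ref{lem:proizv}, which show that $T h_m^\pm/\sin\theta$ and $\frac{d}{d\theta}(Th_m^\pm)$ lie in $\mathcal{A}$ with norm $O(1+|m|)$ --- this is where the weights $(1+|n|)(1+|k|)$ and the hypothesis $q\in\ell^1_2$ actually enter. Only after this global integration by parts does one apply Lemma~\ref{lem:vC}, splitting into regions where the second or third derivative of the phase is bounded below, to harvest the remaining $t^{-1/3}$. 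Your steps (1) and (4), and the use of the scattering relations to reduce to $\mathcal{A}$-amplitudes with $n,k$-uniform control, are in the right spirit, but without the symmetrization and the edge-vanishing of $T$ the claimed rate cannot be reached.
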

\begin{proof}
It suffices to show that
\begin{equation}\label{HP-n1}
|\left[ \E^{-\I t H}P_c \right]_{n,k}|\le C (1+|n|)(1+|k|)t^{-4/3},\quad t\ge 1.
\end{equation}
The representation \eqref{fh} and the bounds \eqref{est3}--\eqref{estC} imply
\begin{equation} \label{alg-dif}
h^\pm_n(\theta),~~\frac{\partial}{\partial \theta} h^\pm_n(\theta)
\in \mathcal A \quad \mbox{if} \quad q\in\ell^1_2.
\end{equation}
Therefore,
$\frac{d}{d\theta} W(\theta):=W'(\theta)\in \mathcal A$. Since
in the non-resonant case $W(\theta)^{-1}\in\mathcal A$ we also infer
\begin{equation}\label{imp12}
\frac{d}{d\theta}T(\theta),\quad \frac{d}{d\theta}  R^\pm(\theta)\in\mathcal A
\end{equation}
by Wiener's lemma. For the derivatives of $h_k^\pm$ bounds of the type \eqref{est10} hold, namely,
\begin{equation}\label{est11}
\Vert{\frac{\partial}{\partial \theta} h^\pm_n}(\cdot)\Vert_{\mathcal A}
\le \tilde C ~~{\rm for}~~ \pm n>0.
\end{equation}
For $n\le k$ we represent the jump of the resolvent across the spectrum  as
\[
\calR(\omega+\I 0)- \calR(\omega-\I 0))=
\frac{ T(\theta)f_k^+(\theta) f_n^-(\theta) + \overline{T(\theta)f_k^+ (\theta)} \overline{f_n^- (\theta)}}{-2\I\sin\theta},
\quad \theta\in[0,\pi].
\]
The scattering relations \eqref{scat-rel} imply
\[
 f_n^-(\theta)=T(-\theta) f_n^+(-\theta)-R^-(-\theta)f_n^-(-\theta),\quad
\overline{f_k^+(\theta)}=
T(\theta) f_k^-(\theta)-R^+(\theta)f_k^+(\theta).
\]
Then using the consistency relation $T \overline { R^- }+ \overline {T } R^+=0$ we come to the formula (cf. \cite[p.13]{S})
\[
\calR(\omega+\I 0)- \calR(\omega-\I 0))=\frac{|T(\theta)|^2}{-2\I\sin\theta}
[f_k^+(\theta)f_n^+(-\theta)+f_k^-(\theta)f_n^-(-\theta)],\quad
\theta\in[0,\pi].
\]
Inserting this into \eqref{spR} and integrating by parts we get
\begin{align}\nonumber
\left[ \E^{-\I t H}P_c \right]_{n,k}
& = \frac{1}{\pi} \int_{-\pi}^{\pi}
\E^{-\I t (2 - 2\cos \theta)}|T(\theta)|^2
[f_k^+(\theta)f_n^+(-\theta)+f_k^-(\theta)f_n^-(-\theta)]d\theta\\
\nonumber
&=\frac{\I}{2\pi t}\int_{-\pi}^{\pi}\E^{-\I t (2 - 2\cos \theta)}
\frac{d}{d\theta}\Big[\frac{|T(\theta)|^2}{\sin\theta}
(f_k^+(\theta)f_n^+(-\theta)+f_k^-(\theta)f_n^-(-\theta))\Big]d\theta\\
\nonumber
&=\left[ \E^{-\I t H}P_c \right]_{n,k}^++ \left[ \E^{-\I t H}P_c \right]_{n,k}^-.
\end{align}
Evaluating the derivative we further obtain
\begin{align}\nonumber
\left[ \E^{-\I t H}P_c \right]_{n,k}^\pm
&=\frac{\I}{2\pi t}\int_{-\pi}^{\pi}\E^{-\I t (2 - 2\cos \theta)}
\frac{d}{d\theta}\Big[\frac{|T(\theta)|^2}{\sin\theta}\E^{\mp\I\theta(k-n)}h _k^\pm(\theta)h_n^\pm(-\theta)\Big]d\theta\\
\nonumber
&=\frac{\pm(k-n)}{2\pi t}\int_{-\pi}^{\pi}\E^{-\I t (2 - 2\cos \theta)}\E^{\mp\I\theta(k-n)}
\frac{|T(\theta)|^2}{\sin\theta}h _k^\pm(\theta)h_n^\pm(-\theta)d\theta\\
\label{I-est}
&-\frac{\I}{2\pi t}\int_{-\pi}^{\pi}\E^{-\I t (2 - 2\cos \theta)}\E^{\mp\I\theta(k-n)}\cos\theta
\frac{|T(\theta)|^2}{\sin^2\theta}h _k^\pm(\theta)h_n^\pm(-\theta)d\theta\\
\nonumber
&+\frac{\I}{2\pi t}\int_{-\pi}^{\pi}\E^{-\I t (2 - 2\cos \theta)}\E^{\mp\I\theta(k-n)}
\frac{\frac{d}{d\theta}\Big[|T(\theta)|^2h _k^\pm(\theta)h_n^\pm(-\theta)\Big]}{\sin\theta}d\theta.
\end{align}
Next, observe that formula \eqref{est3} implies that
if $q\in \ell_2^1$, then $B^\pm_{m,s}\in \ell_1^1(\Z_\pm)$ for any fixed $m$,
and consequently
\begin{equation}\label{proper4}
S^\pm_m(j):=\sum_{s=j}^{\pm\infty}|B^\pm_{m,s}|, \quad S^\pm_m(\cdot)\in\ell^1(\Z_\pm).
\end{equation}
Based on this observation we prove the following
\begin{lemma}\label{T-est}
Let $q\in \ell^1_2$ and  $W(0)W(\pi)\neq 0$. Then
$T(\theta)h _m^\pm(\theta)/\sin\theta\in \mathcal A$,  and
\begin{equation}\label{Test1}
\Big\Vert\frac{T(\theta)h _m^\pm(\theta)}{\sin\theta}\Big\Vert_{\mathcal A}\le C (1 + |m|),\quad m\in \Z.
\end{equation}
\end{lemma}
\begin{proof}
Since  $T(\theta)/\sin\theta=2\I/W(\theta)$ then for $m\in\Z_\pm$ the
bound \eqref{Test1} follows from \eqref{est10} and Theorem~\ref{thm:scat}.
Hence it remains to consider the case $m\in \Z_\mp$. The scattering relations \eqref{scat-rel} imply
\begin{align}\nonumber
T(\theta)h _m^\pm(\theta)
= &(R^\mp(\theta)+1)h_m^\mp(\theta)\E^{\pm 2\I m \theta}-
(h_m^\mp(\theta)-h_m^\mp(-\theta))\E^{\pm 2\I m\theta} \\\label{proper8}
&  + h_m^\mp(-\theta)(1 -\E^{\pm 2\I m\theta}).
\end{align}
Using \eqref{fh} we obtain
\begin{align*}
\frac{h_m^\mp(\theta)-h_m^\mp(-\theta)}{\sin\theta}&=
\sum\limits_{s=\mp 1}^{\mp\infty}
B^\mp_{m,s}\frac{\E^{\mp\I s\theta}-\E^{\pm\I s\theta}}{\sin\theta}=\mp 2\I\sum\limits_{s=\mp 1}^{\mp \infty}
B^\mp_{m,s} \!\!\sum_{j=-(s-1)}^{s-1} \!\! \frac{1-(-1)^{s+j}}{2}\E^{\I j\theta}\\
&=\mp 2\I\sum\limits_{j=-\infty}^{\infty}\Big(\sum\limits_{s=\mp |j|\mp 1}^{\mp\infty}
\frac{1-(-1)^{s+j}}{2} B^\mp_{m,s}\Big)\E^{\I j\theta}.
\end{align*}
Property \eqref{proper4} then  implies
\begin{equation}\label{proper5}
\Big\Vert\frac{h_m^\mp(\theta)-h_m^\mp(-\theta)}{\sin\theta}\Big\Vert_{\mathcal A}\le C ,\quad m\in\Z_\mp.
\end{equation}
and we get
\[
\frac{f_0^\mp(\theta)-f_0^\mp(-\theta)}{\sin\theta},~
\frac{f_{1}^+(\theta)-f_{1}^+(-\theta)}{\sin\theta},~
\frac{f_{-1}^-(\theta)-f_{-1}^-(-\theta)}{\sin\theta}
\in \mathcal A,\quad q\in\ell_2^1,
\]
as well as
\begin{equation}\label{proper9}
\frac{R^\mp(\theta)+1}{\sin\theta}=\frac{1}{W(\theta)}\frac{W(\theta)\mp W^\mp(\theta)}{\sin\theta }\in\mathcal A.
\end{equation}
Furthermore,
\begin{equation}\label{proper10}
\Big\Vert\frac{1 -\E^{\pm 2\I m\theta}}{\sin\theta}\Big\Vert_{\mathcal A}\leq  2|m|.
\end{equation}
Finally, substituting \eqref{proper5}, \eqref{proper9}, and \eqref{proper10} into \eqref{proper8} we get \eqref{Test1}.
\end{proof}
To obtain \eqref{HP-n1} for the first summand in \eqref{I-est} note that $k-n\le 2\max\{|n|,|k|\}$.
Hence  we  apply \eqref{Test1}
to the factor $T(-\theta)h _m^\pm(-\theta)/\sin\theta$, where $|m|= \min\{|n|,|k|\}$.
Then we split the domain of integration into regions where
either the second or third derivative of the phase is nonzero and apply Lemma~\ref{lem:vC} together with
estimate from Theorem~\ref{thm:scat} and Lemma~\ref{T-est}.
To obtain \eqref{HP-n1} for the second  summand in \eqref{I-est} we apply \eqref{Test1}
to both $T(-\theta)h _n^\pm(-\theta)/\sin\theta$ and $T(\theta)h _k^\pm(\theta)/\sin\theta$.

To complete the proof of \eqref{Schr-asn1} we need one more property.
\begin{lemma}\label{lem:proizv} Let $q\in\ell_2^1$ and $W(0)W(\pi)\neq 0$. Then $\frac{d}{d\theta}
(T(\theta)h_m^\pm(\theta))\in\mathcal A$ with
\begin{equation}\label{proper12}
\Big\Vert \frac{d}{d\theta}(T(\theta)h_m^\pm(\theta))\Big\Vert_{\mathcal A}\leq C(1+|m|),\quad m\in\Z.
\end{equation}
\end{lemma}
\begin{proof} Since $T^\prime(\theta)$ and $\frac{d}{d\theta}h_m^\pm(\theta)$
are elements of $\mathcal A$ for $q\in\ell_2^1$,
then for $m\in \mathbb Z_\pm$ the statement of the Lemma is evident in view of \eqref{est11}.
To get it for $m\in \mathbb Z_\mp$ we use \eqref{imp12}, \eqref{est11}, and formula
\[
\frac{d}{d\theta}(T(\theta)h_m^\pm(\theta))=\frac{d}{d\theta}\left(R^\mp(\theta)h_m^\mp(\theta)\right)\,\E^{\pm 2\I m\theta} \pm 2\I m \E^{\pm 2\I m\theta}R^\mp(\theta)h_m^\mp(\theta) +\frac{d}{d\theta}h_m^\mp(-\theta).
\]
\end{proof}
The bound \eqref{HP-n1} for the third summand in \eqref{I-est} now follows
combining Theorem~\ref{thm:scat}, Lemmas~\ref{lem:vC}, \ref{T-est}, and \ref{lem:proizv}.
\end{proof}

\begin{theorem}\label{t-new1}
Let $q\in\ell^1_2$. Then in the non-resonant case the asymptotics
\eqref{as1-new} hold, i.e.,
\begin{equation}\label{Schr-asn2}
\Vert \E^{-\I tH} P_c\Vert_{\ell^2_\sigma\to \ell^2_{-\sigma}}=\mathcal{O}(t^{-3/2}),\quad t\to\infty,\quad\sigma>3/2.
\end{equation}
\end{theorem}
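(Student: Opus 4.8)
The plan is to follow the same strategy that produced Theorem~\ref{t-new}, only tracking one fewer power of $n,k$ and aiming for $t^{-3/2}$ weighted decay rather than $t^{-4/3}$ pointwise decay. Concretely, I would start again from the formula
\[
\left[ \E^{-\I t H}P_c \right]_{n,k}^\pm
=\frac{1}{\pi} \int_{-\pi}^{\pi}
\E^{-\I t (2 - 2\cos \theta)}|T(\theta)|^2 \E^{\mp\I\theta(k-n)}h_k^\pm(\theta)h_n^\pm(-\theta) \,d\theta
\]
obtained just before the integration by parts in the proof of Theorem~\ref{t-new}. Splitting the circle near the two stationary points $\theta=\pm\pi/2$ of the free phase $\phi(\theta)=2-2\cos\theta$ from the rest (exactly as in Step ii) of Proposition~\ref{free-as} and Theorem~\ref{end1}), the part $\tilde{\mathcal K}(t)$ away from $\pm\pi/2$ has $|\phi''|\ge\sqrt2$, so integrating by parts twice against $\E^{-\I t(2-2\cos\theta)}$ and using that $|T(\theta)|^2 h_k^\pm(\theta)h_n^\pm(-\theta)$ together with one $\theta$-derivative is in $\mathcal A$ with norm $O((1+|n|)(1+|k|))$ by \eqref{imp12}, \eqref{est11}, and Lemma~\ref{lem:proizv} yields a pointwise bound $O((1+|n|)(1+|k|)t^{-2})$, which is far better than needed.

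The heart of the matter is the contribution ${\mathcal K}^\pm(t)$ from a neighbourhood of $\theta=\pm\pi/2$, where $\phi''$ vanishes. Here I would again distinguish the three sign-regimes (a) $n\le k\le 0$, (b) $0\le n\le k$, (c) $n\le 0\le k$ and use the scattering relations \eqref{scat-rel}/\eqref{Phi_nk} exactly as in Step ii) of Theorem~\ref{end1} to replace the $e^{\mp\I\theta(k-n)}$-twisted amplitude by amplitudes whose $\theta$-derivatives are bounded in $\mathcal A$ uniformly — or, in the ``bad'' case ${\mathcal K}^+_c$ (resp.~the analogous pieces of ${\mathcal K}^\pm_a$, ${\mathcal K}^\pm_b$), by a twisted amplitude of the form $Y^2$ with an extra factor $\E^{\mp\I t \tilde v\theta}$, $\tilde v=|n+k|/t$. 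For the pieces with bounded-in-$\mathcal A$ amplitude and derivative, integration by parts once in $\theta$ produces $t^{-1}$ times an integral whose amplitude is still in $\mathcal A$; then applying Lemma~\ref{lem:vC} with $s=3$ on the small neighbourhood of $\pm\pi/2$ (where $|\phi'''|=|2\sin\theta|\ge\sqrt2$) gains an extra $t^{-1/3}$ — but that only gives $t^{-4/3}$, not $t^{-3/2}$. To reach $t^{-3/2}$ I would instead mimic Lemma~\ref{Lem:K}: subdivide the dyadic annuli $t_j\le|\theta\mp\pi/2|\le t_{j+1}$ around the degenerate point and, exactly as in \eqref{C1}–\eqref{dop14}, combine the van der Corput bound $|\int|\le C(tt_j)^{-1/2}$ (now with the $\mathcal A$-amplitude via Lemma~\ref{lem:vC} with $s=2$) with the stationary/non-stationary dichotomy in $v$ resp.~$\tilde v$, summing against the weight $(1+|n|)^{-2\sigma}(1+|k|)^{-2\sigma}$ with $\sigma>3/2$; the extra room $\sigma>3/2$ versus $\sigma>1/2$ is precisely what upgrades the $t^{-1/2}$ of Lemma~\ref{Lem:K} to $t^{-3/2}$, because the integration-by-parts estimate \eqref{denom} now carries the factor $(1+|n|)(1+|k|)$ from the amplitude and one still must extract $t^{-3/2}$ in Hilbert–Schmidt norm. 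In other words, the Hilbert–Schmidt square $\sum_{n,k}|[{\mathcal K}^\pm(t)]_{n,k}|^2(1+|n|)^{-2\sigma}(1+|k|)^{-2\sigma}$ should be shown to be $O(t^{-3})$, arguing annulus by annulus as in \eqref{GG}–\eqref{dop14} but now with the amplitude bounds of Lemmas~\ref{T-est} and \ref{lem:proizv}.

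I expect the main obstacle to be the bookkeeping in the degenerate region: one has to carry two ``large parameters'' simultaneously — the weight exponent $\sigma>3/2$ and the amplitude growth $(1+|n|)(1+|k|)$ coming from Lemmas~\ref{T-est} and \ref{lem:proizv} — through the dyadic decomposition of Lemma~\ref{Lem:K}, and verify that in the stationary case ($|v-2|$ or $|\tilde v-2|$ small) the number of lattice points $(n,k)$ in the relevant shell $T_j$, namely $O(t_j t^{1+\ve})$, is still small enough after multiplication by the amplitude factors and the pointwise bound \eqref{C7} to sum to $O(t^{-3})$; here one uses $|n|,|k|\gtrsim t$ on $T_j$ to absorb the $(1+|n|)(1+|k|)$ against the weight, exactly as $p\ge t$ was used in \eqref{GGG}, and the requirement $\sigma>3/2$ (i.e.\ $2\sigma>3$) is what makes $t_j t^{1+\ve-2\sigma}\cdot t_j^{-1}\cdot (1+|n|)^2(1+|k|)^2/(1+|n|)^{2\sigma}(1+|k|)^{2\sigma}$ come out as $O(t^{-3})$. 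In the non-stationary case one instead uses the integration-by-parts bound \eqref{denom}, now with amplitude of $\mathcal A$-norm $O((1+|n|)(1+|k|))$, and sums freely over $(n,k)$, the convergence again being guaranteed by $\sigma>3/2$. Apart from this combinatorial care everything else — the reduction to the kernel formula, the scattering-relation substitutions, and the splitting into $\tilde{\mathcal K}$ plus the degenerate piece — is a routine repetition of the arguments in Proposition~\ref{free-as}, Theorem~\ref{end1}, and Theorem~\ref{t-new}.
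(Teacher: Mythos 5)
Your proposal follows essentially the same route as the paper: start from the post-integration-by-parts representation \eqref{I-est}, bound the resulting amplitudes $Z^j_{n,k}$ in the Wiener algebra by $C(1+|n|)(1+|k|)$ via Lemmas~\ref{T-est} and \ref{lem:proizv} (this is where non-resonance enters, taming the $1/\sin\theta$ factors), and rerun the dyadic stationary-point analysis of Lemma~\ref{Lem:K} in the three sign regimes, with the extra room $\sigma>3/2$ absorbing the amplitude growth exactly as you describe. Two small caveats: the claimed pointwise $O((1+|n|)(1+|k|)t^{-2})$ bound away from $\theta=\pm\pi/2$ from a second integration by parts is not justified when only one derivative of the amplitude is controlled (and a genuine second derivative would bring in $(k-n)^2$, forcing $\sigma>5/2$), but the weaker bound $O((1+|n|)(1+|k|)t^{-3/2})$ from Lemma~\ref{lem:vC} with $s=2$ suffices and is what the paper uses; and in the regime $n\le k\le 0$ the amplitude $h_k^+(\theta)h_n^+(-\theta)$ forces the scattering relation on \emph{both} factors, producing a third phase $\hat\phi_{v}$ with negative velocity $\hat v\le 0$ whose degenerate stationary point sits at $+\pi/2$, so the roles of the two neighbourhoods must be interchanged there --- a detail the paper spells out and your sketch elides.
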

\begin{proof}
We will derive \eqref{Schr-asn2} for $[\E^{-\I tH} P_c]^+$, defined in \eqref{I-est}.
For $[\E^{-\I tH} P_c]^-$ the proof is similar.
Abbreviate
\begin{equation}\label{Psi-nk}
Z_{n,k}(\theta)=|T(\theta)|^2h _k^+(\theta)h_n^+(-\theta).
\end{equation}
Due to \eqref{I-est} it suffices to consider the operators $\cM_j(t)$, $j=1,2,3$,
with the kernels
\[
[\cM_j(t)]_{n,k}=\int_{-\pi}^{\pi}\E^{-\I t\phi_v(\theta)}Z_{n,k}^j(\theta) d\theta,\quad
\phi_v(\theta)=2-2\cos\theta+v\theta,\quad v=|k-n|/t,
\]
where
\begin{equation}\label{Z-nk}
Z_{n,k}^1(\theta)=\frac{k-n}{\sin\theta}Z_{n,k}(\theta),~~
Z_{n,k}^2(\theta)=\frac{\cos\theta}{\sin^2\theta}Z_{n,k}(\theta),~~
Z_{n,k}^3(\theta)=\frac{\frac{d}{d\theta}Z_{n,k}(\theta)}{\sin\theta}
\end{equation}
and obtain the bound
\begin{equation}\label{M-est}
\sum\limits_{n,k\in\Z}[\cM_j(t)]^2_{n,k}
\frac{1}{(1+|n|)^{2\sigma}(1+|k|)^{2\sigma}}\le Ct^{-1}
\end{equation}
for any $\sigma>3/2$ and sufficiently large $t\ge 1$.
As in the proof of Theorem~\ref{end1} step ii) we consider the integrals over $J:=\{\theta:\ |\theta\pm \pi/2|\leq\pi/6\}$
and over $[-\pi,\pi]\setminus J$.

For the integrals over $[-\pi,\pi]\setminus J$ we apply Lemma~\ref{lem:vC} with $s=2$ together with the fact
that $\Vert Z_{n,k}^j\Vert_{\mathcal A}\le C(1+|n|)(1+|k|)$, $j=1,2,3$ and obtain
\[
\Big|\int\limits_{[-\pi,\pi]\setminus J}\E^{-\I t\phi_v(\theta)}Z_{n,k}^j(\theta) d\theta\Big|
\le Ct^{-1/2}(1+|n|)(1+|k|),\quad t\ge 1.
\]
Then the bound \eqref{M-est} for
\[
[\tilde\cM_j(t)]_{n,k}=\int\limits_{[-\pi,\pi]\setminus J}\E^{-\I t\phi_v(\theta)}Z_{n,k}^j(\theta) d\theta
\]
follows.
To estimate the integrals over $J$ we consider the three possibilities
(a) $n\le k \le 0$, (b) $ 0\le n\le k$ and (c) $n\le 0\le k$.

Consider the case (b). Lemma \ref{newl} (ii) for $s=2$ imply
\begin{equation}\label{dTR-est1}
|\frac{d^2}{d\theta^2}T(\theta)|,\quad|\frac{d^2}{d\theta^2}R^\pm(\theta)|\le C,\quad \theta\in J.
\end{equation}
Respectively,
\[
|\frac{\partial}{\partial \theta}Z_{n,k}^j(\theta)|\le C(1+k),\quad\theta\in J,\quad 0\le n\le k.
\]
Then we obtain \eqref{M-est} by the same arguments as for the proof of Theorem~\ref{end1} step ii).\\

Consider the case (c).
Using the scattering relations \eqref{scat-rel} and
equality \eqref{nk1} we obtain
\begin{align*}
[\cM_j^{\pm}(t)]_{n,k}:&=\int_{J}
\E^{-\I t\phi_v(\theta)}Z_{n,k}^j(\theta) d\theta\\
&=\int_{J}
\E^{-\I t\phi_v(\theta)}Z_{n,k,1}^j(\theta) d\theta
+\int_{J}
\E^{-\I t{\tilde\phi}_v(\theta)}Z_{n,k,2}^j(\theta) d\theta
\end{align*}
where ${\tilde\phi}_v(\theta)$ is defined in  \eqref{tphi}, and
\begin{align*}
Z_{n,k,j}^1(\theta)&=\frac{k-n}{\sin\theta}Z_{n,k,j}(\theta),~~
Z_{n,k,j}^2(\theta)=\frac{\cos\theta}{\sin^2\theta}Z_{n,k,j}(\theta),\\
Z_{n,k,1}^3(\theta)&=\frac{\frac{d}{d\theta}Z_{n,k,1}(\theta)}{\sin\theta},~~
Z_{n,k,2}^3(\theta)=\frac{\frac{d}{d\theta}Z_{n,k,2}(\theta)-2\I n Z_{n,k,2}}{\sin\theta}
\end{align*}
with
\[
Z_{n,k,1}(\theta)=T(\theta)h _k^+(\theta)h_n^-(\theta), \quad
Z_{n,k,2}(\theta)=T(\theta)R^-(-\theta)h _k^+(\theta)h_n^-(-\theta).
\]
Lemma \ref{newl} (ii) and \eqref{dTR-est1} imply
\[
|\frac{\partial}{\partial \theta}Z_{n,k,1}^j(\theta)|, \:
|\frac{\partial}{\partial \theta}Z_{n,k,2}^j(\theta)|
\le C(1+\max\{|n|,|k|\}),~~\theta\in J.
\]
Hence \eqref{M-est} for the case (c) also follows.

It remains to consider the case (a).
Denote
\[
{\hat\phi}_v(\theta)=2-2\cos\theta+\hat v\theta,~~{\rm where}~~
\hat v(\theta)=-|k-n|/t\le 0.
\]
The scattering relations \eqref{scat-rel}
now imply
\begin{align*}
[\cM_j^{\pm}(t)]_{n,k}:&=\int_{J}
\E^{-\I t\phi_v(\theta)}Z_{n,k}^j(\theta) d\theta
=\int_{J}
\E^{-\I t\phi_v(\theta)}X_{n,k,1}^j(\theta) d\theta\\
&+\int_{J}
\E^{-\I t{\tilde\phi}_v(\theta)}X_{n,k,2}^j(\theta) d\theta
+\int_{J}
\E^{-\I t{\hat\phi}_v(\theta)}X_{n,k,3}^j(\theta) d\theta,
\end{align*}
where
\begin{align*}
X_{n,k,j}^1(\theta)&=\frac{k-n}{\sin\theta}X_{n,k,j}(\theta),\quad
X_{n,k,j}^2(\theta)=\cos\theta~\frac{X_{n,k,j}(\theta}{\sin^2\theta},\\
X_{n,k,1}^3(\theta)&=\frac{\frac{d}{d\theta}X_{n,k,1}(\theta)}{\sin\theta},
X_{n,k,3}^3(\theta)=\frac{\frac{d}{d\theta}X_{n,k,3}(\theta)+2\I (k-n)X_{n,k,3}}{\sin\theta}\\
X_{n,k,2}^3(\theta)&=\frac{\frac{d}{d\theta}X_{n,k,2}(\theta)+2\I kR^-(\theta)h _k^-(\theta)h_n^-(\theta)
-2\I\, n R^-(-\theta)h _k^-(-\theta)h_n^-(-\theta)}{\sin\theta}
\end{align*}
and
\begin{align*}
X_{n,k,1}(\theta)&=h_k^-(-\theta)h_n^-(\theta),\\
X_{n,k,2}(\theta)&=R^-(-\theta)h _k^-(-\theta)h_n^-(-\theta)+R^-(\theta)h _k^-(\theta)h_n^-(\theta),\\
X_{n,k,3}(\theta)&=|R^-(\theta)|^2h _k^-(\theta)h_n^-(-\theta).
\end{align*}
From Lemma \ref{newl} (ii) and \eqref{dTR-est1} it follows that
\[
|\frac{\partial}{\partial \theta}X_{n,k,m}^j(\theta)|
\le C(1+|n|)(1+|k|),~~\theta\in J.
\]
The integrals with the phase functions $\phi_v(\theta)$ and ${\tilde\phi}_v(\theta)$ can been estimated
similarly as in the previous cases. Since $\hat v\le 0$, then in order
to estimate the integrals with the phase functions ${\hat\phi}_v(\theta)$ we can interchange the methods
for $|\theta- \pi/2|\le \pi/6$ and for $|\theta+\pi/2|\le \pi/6$.
\end{proof}

\setcounter{equation}{0}

\section{Wave equation}
\label{KG-sect}
Here we extend our main results to the  wave equation \eqref{KGE}.
\subsection{Free wave equation}
\label{KGF-sect}
Set $\mathbf{u}_n(t)= \bigl(u_n(t),\dot u_n(t)\bigr)$.
Then \eqref{KGE} with $q=0$ reads
\begin{equation}\label{KGE0}
  \I\dot{\mathbf{u}}(t)=\mathbf{H}_0 \mathbf{u}(t),\quad t\in\R,
\end{equation}
where
\begin{equation*}
\mathbf{H}_0=
\begin{pmatrix}
  0               &   \I\\
 \I(\Delta_L-\mu^2)   &   0
\end{pmatrix}.
\end{equation*}
The continuous spectrum of $\mathbf{H}_0$ coincides with $\overline\Gamma$, where
\[
\Gamma=(-\sqrt{\mu^2+4},-\mu)\cup(\mu,\sqrt{\mu^2+4}).
\]
The resolvent $\mathbf{R}_0(\omega)=(\mathbf{H}_0-\omega)^{-1}$
can be expressed in terms of  $\calR_0(\omega)=(H_0-\omega)^{-1}$ (see \cite{kkk}):
\begin{equation}\label{KGER0}
   \mathbf{R}_0(\omega)=
   \begin{pmatrix}
            \omega \calR_0(\omega^2-\mu^2)           &   \I \calR_0(\omega^2-\mu^2)\\
         -\I(1 +\omega^2 \calR_0(\omega^2-\mu^2))      &   \omega \calR_0(\omega^2-\mu^2)
     \end{pmatrix}.
\end{equation}
Note that factorizing $H_0$ according to $H_0-\mu^2=A^*A$ (cf.\ \cite[Sect.~11.1]{tjac}) we can
use $\bigl(u_n(t), A \dot u_n(t)\bigr)$ to write \eqref{KGE0} in self-adjoint form. We refer to \cite{GGHT}
and the references therein for further details.

Denote by ${\bf l}^p_\sigma=\ell^p_\sigma\oplus\ell^p_\sigma$, $p\ge 1$, $\sigma\in\R$.
\begin{lemma}\label{KGfree-as}
Let $\mu>0$. Then the following asymptotics hold
\begin{equation}\label{KGH0-as1}
\Vert \E^{-\I t\mathbf{H}_0}\Vert_{{\bf l}^1\to {\bf l}^\infty}=\mathcal{O}(t^{-1/3}),\quad t\to\infty,
\end{equation}
\begin{equation}\label{KGH0-as2}\Vert \E^{-\I t\mathbf{H}_0}\Vert_{{\bf l}^2_\sigma\to{\bf l}^2_{-\sigma}}
=\mathcal{O}(t^{-1/2}),\quad t\to\infty, \quad\sigma>1/2.
\end{equation}
\end{lemma}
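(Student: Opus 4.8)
The plan is to reduce the two estimates to an oscillatory-integral analysis parallel to the proof of Proposition~\ref{free-as}. By the spectral calculus for the self-adjoint operator $H_0=-\Delta_L\ge 0$ (equivalently, by inserting the resolvent formula \eqref{KGER0} into the spectral representation of $\E^{-\I t\mathbf{H}_0}$ and substituting $\lambda=\omega^2-\mu^2$), the propagator $\E^{-\I t\mathbf{H}_0}$ is the $2\times 2$ matrix whose entries are, up to constants, $\cos(t\sqrt{H_0+\mu^2})$, $\frac{\sin(t\sqrt{H_0+\mu^2})}{\sqrt{H_0+\mu^2}}$ and $\sqrt{H_0+\mu^2}\,\sin(t\sqrt{H_0+\mu^2})$. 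Under the discrete Fourier transform $H_0+\mu^2$ becomes multiplication by $\Omega(\theta)^2:=2-2\cos\theta+\mu^2$ (with $\Omega(\theta)>0$), so each entry has a kernel of the form
\begin{align*}
\frac{1}{2\pi}\int_{-\pi}^{\pi}\E^{\pm\I t\,\Omega(\theta)}\,m(\theta)\,\E^{-\I\theta(n-k)}\,d\theta
&=\frac{1}{2\pi}\int_{-\pi}^{\pi}\E^{\I t\,\Phi_{\pm,v}(\theta)}\,m(\theta)\,d\theta,
\end{align*}
where $\Phi_{\pm,v}(\theta)=\pm\Omega(\theta)-v\theta$, $v=(n-k)/t$, and the amplitude $m$ is one of $1$, $\Omega(\theta)^{-1}$, $\Omega(\theta)$. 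Since $\mu>0$ we have $\mu^2\le\Omega(\theta)^2\le 4+\mu^2$, so $m$ is real-analytic and bounded above and below on $[-\pi,\pi]$; in particular $m\in\mathcal{A}$, and $\Phi_{\pm,v}''=\pm\Omega''$, $\Phi_{\pm,v}'''=\pm\Omega'''$ do not depend on $v$.

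Next I would record the relevant features of $\Omega$: it is even, $\Omega'(\theta)=\sin\theta/\Omega(\theta)$, and $\Omega''(\theta)=0$ at exactly two interior points $\pm\theta_0$ with $\theta_0\in(0,\pi)$; at these two points $\Omega'''\ne 0$, whereas the band edges $\theta=0,\pm\pi$ are non-degenerate, $\Omega''(0)\ne0$ and $\Omega''(\pi)\ne0$ (this is the only place $\mu>0$ enters; at $\mu=0$ the edge $\theta=0$ becomes a resonance, consistent with the exclusion of that case). Consequently $|\Omega''|+|\Omega'''|\ge c_1>0$ on the whole circle, so $[-\pi,\pi]$ decomposes into boundedly many subintervals on each of which $|\Omega''|\ge c_1/2$ or $|\Omega'''|\ge c_1/2$. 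For \eqref{KGH0-as1} one then argues exactly as in Step~i) of Proposition~\ref{free-as}: when $|v|$ is large enough that $\Phi_{\pm,v}$ has no stationary point one integrates by parts once, and otherwise one applies the van der Corput lemma (or Lemma~\ref{lem:vC}, since $m\in\mathcal{A}$) on each subinterval. The worst contribution is $\mathcal{O}(t^{-1/3})$, uniformly in $n,k$, which is \eqref{KGH0-as1}.

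For \eqref{KGH0-as2} the pointwise decay falls below $t^{-1/2}$ only near $\pm\theta_0$; away from fixed small neighbourhoods of these two points the stationary points are non-degenerate or absent, so stationary phase gives a pointwise $\mathcal{O}(t^{-1/2})$ bound, and since $\sum_n(1+|n|)^{-2\sigma}<\infty$ for $\sigma>1/2$ this part is $\mathcal{O}(t^{-1/2})$ from ${\bf l}^2_\sigma$ to ${\bf l}^2_{-\sigma}$. The contribution of a neighbourhood of (say) $\theta_0$ plays the role of the operator $K(t)$ in Proposition~\ref{free-as}, and I would transcribe the proof of Lemma~\ref{Lem:K} almost verbatim, with $-\pi/2$ replaced by $\theta_0$ and $\phi_2$ by $\Phi_{+,v}$: a dyadic decomposition in $|\theta-\theta_0|$, van der Corput on each annulus (using $\Omega''(\theta_0)=0\ne\Omega'''(\theta_0)$ and the boundedness of $m$) giving the bound $(t\,t_j)^{-1/2}$ on the $j$-th piece, followed by the weighted lattice sum split according to whether $|\,|n-k|-v_0 t\,|$ is small or large, where $v_0:=\Omega'(\theta_0)>0$. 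The single new input is the geometric estimate behind \eqref{GGG}: on the ``small'' set one has $|n-k|\ge\frac{1}{2}v_0 t$ for large $t$ (because $v_0>0$), so, passing to $p=n-k$, $q=n+k$, one of the factors $(1+|n|)^{-2\sigma}$, $(1+|k|)^{-2\sigma}$ is $\mathcal{O}(t^{-2\sigma})$, exactly as in \eqref{GGG}; the rest of the computation of Lemma~\ref{Lem:K} then goes through unchanged. The point $-\theta_0$ is treated the same way with $\Phi_{-,v}$ (or the reflection $\theta\mapsto-\theta$).

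The step I expect to be the main obstacle is this last one: one has to verify that the weighted-summation machinery of Lemma~\ref{Lem:K}, written there for the specific degenerate point $\theta=-\pi/2$ of the free Schr\"odinger phase, uses only the structural facts $\Omega''(\theta_0)=0\ne\Omega'''(\theta_0)$ and $v_0=\Omega'(\theta_0)>0$, and one must carry along the bounded amplitude $m$, which forces using Lemma~\ref{lem:vC} in place of the plain van der Corput lemma at the appropriate place. Everything else is routine.
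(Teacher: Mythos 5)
Your proposal follows the paper's proof essentially verbatim: the same reduction to oscillatory integrals with phase $\pm\sqrt{2-2\cos\theta+\mu^2}+v\theta$ and smooth bounded amplitude (the paper's $g$ and $1/g$), the same identification of the two interior degenerate points $\pm\theta_0$ with $\cos\theta_0=\varkappa$ and $v_0=\sqrt{\varkappa}>0$, van der Corput for the $t^{-1/3}$ bound, and a transcription of Lemma~\ref{Lem:K} around $\theta_0$ for the weighted $t^{-1/2}$ bound. The verification you flag at the end --- that the second case of Lemma~\ref{Lem:K} uses only $\Omega''(\theta_0)=0\ne\Omega'''(\theta_0)$ and $v_0>0$ --- is exactly what the paper supplies, via a Taylor expansion of $\Phi_v'$ about $\theta_0$ together with the observation that $\Phi_v''$ has constant sign on each half-annulus, so your plan is sound.
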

\begin{proof}
As in the proof of Proposition \ref{free-as} we consider $t\ge 1$ and apply the spectral representation:
\[
\E^{-\I t\mathbf{H}_0}=\frac 1{2\pi \I}\int\limits_{\Gamma}
\E^{-\I t\omega}({\bf R_0}(\omega+\I 0)- \mathbf{R}_0(\omega-\I 0))\,d\omega.
\]
We prove asymptotics \eqref{KGH0-as1} and \eqref{KGH0-as2} only for the entry $[\E^{-\I t\mathbf{H}_0}]^{12}$.
The other entries of the matrix $\E^{-\I t\mathbf{H}_0}$ can be treated similarly.

Let $\theta_+=\theta_+(\omega^2-\mu^2)\in [-\pi,0]$ be the solution of
$2-2\cos\theta=\omega^2-\mu^2$ and $\theta_-=-\theta_+$.
Due to \eqref{R02} we have
\begin{equation}\label{KGPPf}
[\E^{-\I t\mathbf{H}_0}]^{12}_{n,k}=\frac 1{2\pi}\int\limits_{\Gamma}\E^{-\I t\omega}
\Big(\frac{\E^{-\I\theta_+|n-k|}}{\sin\theta_+}-
\frac{\E^{-\I\theta_-|n-k|}}{\sin\theta_-}\Big)d\omega=I_- + I_+,
\end{equation}
where
\begin{equation}\label{KGW}
I_\pm:=-\frac 1{2\pi}\int\limits_{-\pi}^{\pi}\frac{\E^{\pm\I t\,g(\theta)-\I\theta|n-k|}d\theta}{g(\theta)}
,\quad g(\theta):=\sqrt{2-2\cos\theta+\mu^2}.
\end{equation}
{\it Step i)}
To prove \eqref{KGH0-as1} it suffices  to obtain the bound
\begin{equation}\label{K-as}
\sup_{n,k}|[\E^{-\I t\mathbf{H}_0}]^{12}_{n,k}|\le Ct^{-1/3},\quad t\ge 1.
\end{equation}
We consider only the integral $I_-$.
Abbreviate $v:=\frac{|n-k|}t\ge 0$ and set $\varkappa=(2+\mu^2-\sqrt{4\mu^2+\mu^4})/2$,
$0<\varkappa<1$.
It is easy to check that if $v\not= v_0:=\sqrt\varkappa$ then
the phase function
\begin{equation}\label{phi}
\Phi_v(\theta)=g(\theta)+v\theta,
\end{equation}
where $g(\theta)$ is defined in \eqref{KGW}, has at most two non-degenerate stationary points.
In the case  $v=v_0$ there exists a unique degenerate stationary point $\theta_0=-\arccos\varkappa$,
$-\pi/2<\theta_0<0$,
such that  $\Phi'''(\theta_0)=\sqrt{\varkappa}\not =0$. Moreover, 
$g(\theta)>\mu>0$,  and therefore $g^{-1}(\theta)$ is a smooth function.
Hence \eqref{K-as} follows from the van der Corput lemma.\\
{\it Step ii)}
To prove \eqref{KGH0-as2}
we divide the domain of integration in $I_-$ into the domains 
$J=\{\theta:\:|\theta-\theta_0|\le \nu|\theta_0|\}$ and $[-\pi, \pi]\setminus J$,
where $\nu=\nu(\mu)$, $0<\nu\le 1$, will be specified below.
We further divide the domain $J$ into subdomains  $t_j\le |\theta-\theta_0|\le t_{j+1}$,
$0\le j\le N$, where $t_j$ for $j=1,\dots,N$ is chosen as in Lemma~\ref{Lem:K}, and $t_{N+1}=\nu|\theta_0|$.
The asymptotics \eqref{KGH0-as2} for the part over $[-\pi, \pi]\setminus J$ follow from the stationary phase method.
To get \eqref{KGH0-as2} for
\[
[{\mathbf K}_j(t)]_{n,k}=\int\limits_{t_j\le |\theta-\theta_0|\le t_{j+1}}\E^{-\I t\Phi_v(\theta)}
\frac{d\theta}{g(\theta)},\quad 1\le j\le N,
\]
we consider $|v_0-v|\le \frac 12 v_0t_jt^\ve$ and $|v_0-v|\ge \frac 12 v_0t_jt^\ve$ separately.
The first case is identical to the first case of Lemma \ref{Lem:K}.
In the second case we apply integration by parts similarly to \eqref{denom}. Namely, we have to estimate: (a) $|\Phi^\prime_v(\theta)|^{-1}$ at the points $\theta_0\pm t_j$ and $\theta_0 \pm t_{j+1}$,  and (b) the integral of the function $|\Phi^{\prime\prime}_v(\theta)|(\Phi^\prime_v(\theta))^{-2}$ between these points. But since the function
$\Phi^{\prime\prime}_v(\theta)$ does not change its sign on the intervals
$[\theta_0+t_j, \theta_0+t_{j+1}]$ and $[\theta_0-t_{j+1},\theta_0-t_j]$, then the antiderivative of the function
$|\Phi^{\prime\prime}_v(\theta)|(\Phi^\prime_v(\theta))^{-2}$ is equal up to a sign to the function $(\Phi^\prime_v(\theta))^{-1}$. Thus, it is sufficient to consider the case (a) only.

We have ${\Phi}_v(\theta)=g(\theta)+v\theta$, therefore
\[
\Phi^\prime_v(\theta)=g^\prime(\theta)+v
=g^\prime(\theta_0)+\frac 12 g^{\prime\prime\prime}(\tilde\theta)(\theta - \theta_0)^2+v
=\frac 12 g^{\prime\prime\prime}(\tilde\theta)(\theta - \theta_0)^2+v-v_0.
\]
Here we used formulas $g^{\prime}(\theta_0)=-v_0$ and $g^{\prime\prime}(\theta_0)=0$.
Hence for large $t$
\[
|\Phi^\prime_v(\theta_0\pm t_{j+s})|\ge |v-v_0|-Ct^2_{j+s}\ge t_j(\frac 12 v_0t^\ve-C)\ge C_1t_j, \quad j=1,..., N-1,\quad s=0,1,
\]
and then
\[
|[{\mathbf K}_j(t)]_{n,k}|\le Ct^{-1}t_j^{-1}\le C t^{-1/2}, \quad j=1,..., N-1
\]
as in \eqref{dop14}.

In the case $j=N$ we have $|v-v_0|\ge \frac 12 v_0$. Further,
\[
\Phi^\prime_v(\theta_0\pm t_{N+1})
=\frac 12 g^{\prime\prime\prime}(\tilde\theta)(\nu|\theta_0|)^2+v-v_0
\]
Since $|g^{\prime\prime\prime}(\theta)|\le G=G(\mu)$, $\theta\in [-\pi,\pi]$,
then we can choose 
$\nu=\min\{1, \sqrt{\frac{2v_0}{3G\theta_0^2}}\,\}$ to obtain
$|\Phi^\prime_v(\theta_0\pm t_{N+1})|\ge \frac 16 v_0$.
Respectively, $|\Phi^\prime_v(\theta_0\pm t_{N+1})|^{-1}\le 6/v_0$, and hence
\[
|[{\mathbf K}_N(t)]_{n,k}|\le Ct^{-1}.
\]
\end{proof}
\begin{remark}
The solution of the free wave equation \eqref{KGE0}, corresponding to $\mu=0$,  does not decay
as $t\to\pm\infty$. In fact, the first component of the solution  is given by
\begin{equation}
u_n(t) = \sum_{m\in\Z} c_{n-m}(t) u_m(0) + s_{n-m}(t) \dot{u}_m(0),
\end{equation}
where
\begin{align}
c_n(t) &= \frac{1}{2\pi} \int_{-\pi}^\pi \cos(\sqrt{1-\cos\theta} \sqrt{2}t)
\mathrm{e}^{\I \theta n} d\theta = J_{2|n|}(2 t),\\ \nonumber
s_n(t) &= \frac{1}{2\pi} \int_{-\pi}^\pi \frac{\sin(\sqrt{1-\cos\theta} \sqrt{2}t) }{\sqrt{1-\cos\theta}}
\mathrm{e}^{\I \theta n} d\theta = \int_0^t c_n(s) ds\\
&= \frac{t^{2|n|+1}}{2^{|n|} (|n|+1)!} \,
{}_1F_2\Big(\frac{2|n|+1}{2};(\frac{2|n|+3}{2},2|n|+1); -t^2\Big).
\end{align}
Here $J_n(x)$, ${}_pF_q(\underline{u};\underline{v};x)$ denote the Bessel and generalized hypergeometric functions, respectively.
In particular, while $c_n(t)=O(t^{-1/2})$ for fixed $n$, we have $s_n(t)=\frac{1}{2} + O(t^{-1/2})$ for fixed $n$.
\end{remark}
\subsection{Perturbed  wave equation}
\label{KGP-sect}
In matrix form \eqref{KGE} reads
\begin{equation}\label{KGE1}
  \I\dot{\mathbf{u}}(t)= \mathbf{H} \mathbf{u}(t),\quad t\in\R,
\end{equation}
where
\[
\mathbf{H}=
\begin{pmatrix}
  0               &   \I\\
 \I(\Delta_L-\mu^2-q)   &   0
\end{pmatrix}.
\]
The resolvent $\mathbf{R}(\omega)=(\mathbf{H}-\omega)^{-1}$
can be expressed in terms of  $\calR(\omega)=(H-\omega)^{-1}$ (see \cite{kkk}):
\begin{equation}\label{KGE4}
 \mathbf{R}(\omega)=
   \begin{pmatrix}
            \omega \calR(\omega^2-\mu^2)           &   \I \calR(\omega^2-\mu^2)\\
         -\I (1 +\omega^2 \calR(\omega^2-\mu^2))      &   \omega \calR(\omega^2-\mu^2)
     \end{pmatrix}.
\end{equation}
Representation \eqref{KGE4} and Lemma \ref{BV} imply
the limiting  absorption principle for the perturbed  resolvent:
\begin{lemma}\label{Lw}
Suppose $q\in\ell^1$. Then
for $\omega\in\Gamma$ the convergence
\[
\mathbf{R}(\omega\pm \I\varepsilon)\to\mathbf{R}(\omega\pm \I 0),\quad\varepsilon\to 0+,
\]
holds in ${\mathcal L}({\bf l}^2_\sigma,{\bf l}^2_{-\sigma})$ with $\sigma>1/2$.
\end{lemma}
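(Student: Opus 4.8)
The plan is to reduce everything to the already-established limiting absorption principle for the scalar resolvent, Lemma~\ref{BV}, via the explicit matrix representation \eqref{KGE4}. First I would observe that the spectral map $\omega\mapsto\omega^2-\mu^2$ sends a neighbourhood of a point $\omega\in\Gamma$ into a neighbourhood of a point $\lambda=\omega^2-\mu^2\in(0,4)$, and that this map is a local biholomorphism preserving the upper/lower half-plane structure near such points: for $\omega>\mu$ one has $\frac{d}{d\omega}(\omega^2-\mu^2)=2\omega>0$, while for $\omega<-\mu$ one has $2\omega<0$, so that $\im\omega>0$ corresponds to $\im(\omega^2-\mu^2)>0$ on $(\mu,\sqrt{\mu^2+4})$ and to $\im(\omega^2-\mu^2)<0$ on $(-\sqrt{\mu^2+4},-\mu)$. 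In either case, as $\ve\to0+$, the argument $\lambda\pm\I\ve'$ approaches $(0,4)$ from the appropriate side with $\ve'\to0+$, so Lemma~\ref{BV} gives $\calR((\omega\pm\I\ve)^2-\mu^2)\to\calR(\lambda\pm\I0)$ in $\mathcal{L}(\ell^2_\sigma,\ell^2_{-\sigma})$ for $\sigma>1/2$ (with a possible sign swap between $+$ and $-$ on the left branch of $\Gamma$, which is harmless).

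Next I would plug this into the four entries of \eqref{KGE4}. Each entry is a polynomial in $\omega$ (of degree at most two) multiplied by either the identity, the constant operator $1$, or $\calR(\omega^2-\mu^2)$. Since $\omega$ ranges over a fixed compact subset of $\Gamma$, the scalar prefactors $\omega$ and $\omega^2$ are bounded and converge; the identity and the constant operator $1$ are continuous (indeed constant) in $\ve$; and $\calR(\omega^2-\mu^2)$ converges in the stated operator norm by the previous step. Hence each entry of $\mathbf{R}(\omega\pm\I\ve)$ converges in $\mathcal{L}(\ell^2_\sigma,\ell^2_{-\sigma})$, and therefore so does the whole matrix in $\mathcal{L}({\bf l}^2_\sigma,{\bf l}^2_{-\sigma})=\mathcal{L}(\ell^2_\sigma\oplus\ell^2_\sigma,\ell^2_{-\sigma}\oplus\ell^2_{-\sigma})$, which is exactly the claim.

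I do not expect any serious obstacle here; this is a routine transfer lemma. The only point requiring a little care is the orientation issue on the two components of $\Gamma$, i.e.\ making sure that "$+\I0$" on the $\omega$-side really does map to the correct boundary value of $\calR$ on each branch. One clean way to phrase it is: write $\omega\pm\I\ve = \pm|\omega|\sqrt{1\pm 2\I\ve/\omega + O(\ve^2)}\cdot(\dots)$—but it is simpler to just note that $\im((\omega\pm\I\ve)^2-\mu^2) = \pm2\omega\ve + O(\ve^2)$ has the same sign as $\pm\,\mathrm{sgn}(\omega)\,\ve$, and invoke Lemma~\ref{BV} with $\omega_0=\omega^2-\mu^2$ and the boundary value $\calR(\omega_0\pm\mathrm{sgn}(\omega)\I0)$. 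A second minor point is that Lemma~\ref{BV} is stated for each fixed interior point of $(0,4)$; since we only need convergence at a fixed $\omega\in\Gamma$ (the limiting absorption principle is a pointwise-in-$\omega$ statement), this is already enough, and no uniformity in $\omega$ is required.
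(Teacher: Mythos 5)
Your argument is correct and is exactly the route the paper takes: the paper offers no written proof beyond the remark that representation \eqref{KGE4} together with Lemma~\ref{BV} implies the claim, and your write-up simply fills in that reduction (including the orientation bookkeeping on the two branches of $\Gamma$, which the paper leaves implicit). The only detail you gloss over is that $(\omega\pm\I\ve)^2-\mu^2$ approaches $(0,4)$ along a curve whose real part shifts by $O(\ve^2)$ rather than vertically, but the dominated-convergence argument behind Lemma~\ref{BV} is insensitive to this.
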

For the dynamical group associated with the perturbed wave equation \eqref{KGE1}
the spectral representation of type \eqref{spR} holds:
\begin{equation}\label{PP1}
   \E^{-\I t\mathbf{H}}{\bf P}_{c}
   =\frac 1{2\pi \I}\int\limits_{\Gamma}
   \E^{-\I t\omega}( \mathbf{R}(\omega+\I 0)- \mathbf{R}(\omega-\I 0))\,d\omega.
\end{equation}
Here ${\bf P}_{c}$ is the projection onto the continuous spectrum of $\mathbf{H}^2$.
Next, we prove asymptotics of type \eqref{fullp} and \eqref{as1-new} for \eqref{KGE1}.
\begin {theorem}\label{end3}
Let $\mu>0$ and $q\in\ell^1_1$. Then the following asymptotics holds
\begin{equation}\label{KGE-as1}
\Vert \E^{-\I t\mathbf{H}}{\bf P}_{c}\Vert_{{\bf l}^1\to {\bf l}^\infty}=
\mathcal{O}(t^{-1/3}),\quad t\to\infty.
\end{equation}
and
\begin{equation}\label{KGE-as2}
\Vert\E^{-\I t\mathbf{H}} {\bf P}_c\Vert_{{\bf l}^2_\sigma\to{\bf l}^2_{-\sigma}}
=\mathcal{O}(t^{-1/2}),\quad t\to\infty,\quad\sigma>1/2.
\end{equation}
\end{theorem}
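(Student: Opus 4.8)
The plan is to reduce the matrix estimate to the scalar estimates already proven in Theorem~\ref{end1}. By \eqref{KGE4} the entries of $\mathbf{R}(\omega\pm\I0)$ are built from $\calR(\omega^2-\mu^2\pm\I0)$ together with the bounded multipliers $\omega$, $\omega^2$ and the constant $1$. The constant term contributes only to the $(2,1)$-entry and, after inserting it into \eqref{PP1}, it produces $\frac{1}{2\pi\I}\int_\Gamma \E^{-\I t\omega}(-\I)(1-(-1))\,d\omega$-type terms which vanish because the jump of the constant across the cut is zero; more precisely $\mathbf{R}(\omega+\I0)-\mathbf{R}(\omega-\I0)$ only sees the jump of $\calR(\omega^2-\mu^2+\I0)-\calR(\omega^2-\mu^2-\I0)$. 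So it suffices to treat, for each matrix entry, an integral of the form
\[
\frac{1}{2\pi\I}\int_\Gamma \E^{-\I t\omega}\, p(\omega)\,\big([\calR(\omega^2-\mu^2+\I0)]_{n,k}-[\calR(\omega^2-\mu^2-\I0)]_{n,k}\big)\,d\omega,
\]
where $p(\omega)\in\{1,\omega,\omega^2\}$ is smooth and bounded on $\overline\Gamma$.

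Next I would change variables. On the positive branch $\omega\in(\mu,\sqrt{\mu^2+4})$ set $\lambda=\omega^2-\mu^2\in(0,4)$, so $\omega=\sqrt{\lambda+\mu^2}$ and $d\omega=\frac{d\lambda}{2\sqrt{\lambda+\mu^2}}$; the negative branch $\omega\in(-\sqrt{\mu^2+4},-\mu)$ gives the same $\lambda$-range with $\omega=-\sqrt{\lambda+\mu^2}$. Writing $\lambda=2-2\cos\theta$ as in \eqref{thetadef} and expressing $\calR$ through the Jost solutions exactly as in \eqref{HP1}, each entry of $\E^{-\I t\mathbf H}\mathbf P_c$ becomes an oscillatory integral
\[
\frac{1}{2\pi}\int_{-\pi}^{\pi}\E^{\mp\I t\sqrt{2-2\cos\theta+\mu^2}}\,Y_{n,k}(\theta)\,m(\theta)\,d\theta,
\]
with $Y_{n,k}(\theta)=h_k^+(\theta)h_n^-(\theta)T(\theta)$ as in \eqref{igrek} and $m(\theta)$ a smooth, bounded, $q$-independent factor coming from $p(\omega)/\sqrt{\lambda+\mu^2}$ and the Jacobian. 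Crucially $m\in\mathcal A$ (it is smooth on the circle), so $Y_{n,k}m\in\mathcal A$ with $\mathcal A$-norm bounded uniformly in $n,k$ by the argument already used in Theorem~\ref{end1} (Theorem~\ref{thm:scat}, \eqref{est10}, \eqref{Phi_nk}, \eqref{Phi-est}). The phase is $\pm\Phi_v(\theta)=\pm(g(\theta)+v\theta)$ with $g(\theta)=\sqrt{2-2\cos\theta+\mu^2}$, $v=|n-k|/t$ — exactly the phase analyzed in the proof of Lemma~\ref{KGfree-as}.

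For \eqref{KGE-as1} I would then repeat Step~i) of Lemma~\ref{KGfree-as}: since $g>\mu>0$ is smooth, $\Phi_v$ has at most two non-degenerate stationary points unless $v=v_0=\sqrt{\varkappa}$, in which case there is a single stationary point $\theta_0$ with $\Phi'''(\theta_0)\neq0$; splitting $[-\pi,\pi]$ into finitely many subintervals on which either $|\Phi_v''|$ or $|\Phi_v'''|$ is bounded below and applying the generalized van der Corput Lemma~\ref{lem:vC} (this is where the Wiener-algebra bound on $Y_{n,k}m$ is used) yields the $t^{-1/3}$ bound uniformly in $n,k$, hence \eqref{KGE-as1}. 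For \eqref{KGE-as2} I would mimic Step~ii) of Theorem~\ref{end1} combined with Step~ii) of Lemma~\ref{KGfree-as}: away from $\theta_0$ the stationary phase / Lemma~\ref{lem:vC} with $s=2$ gives a uniform $t^{-1/2}$ bound, which transfers to $\ell^2_\sigma\to\ell^2_{-\sigma}$ for $\sigma>1/2$; near $\theta_0$ one runs the dyadic decomposition $t_j=t^{-(1/2)^j}$ of Lemma~\ref{Lem:K}, using the scattering relations \eqref{scat-rel} and \eqref{Phi_nk} to replace $Y_{n,k}$ by combinations of $h^\pm$, $R^\pm$ whose $\mathcal A$-norms and derivative bounds are controlled by Lemma~\ref{newl} and Theorem~\ref{thm:scat}, exactly as in cases (a)--(c) there. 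The main obstacle is bookkeeping rather than a genuinely new difficulty: one must check that the smooth factor $m(\theta)$ and the slightly different phase $g(\theta)+v\theta$ (versus $2-2\cos\theta+v\theta$) do not spoil the derivative estimates near the degenerate point $\theta_0$, and that the integration-by-parts argument around $\theta_0\pm t_j$ still closes — but since $g$ and $g^{-1}$ are smooth with $g''(\theta_0)=0$, $g'''(\theta_0)\neq0$, this is precisely the situation already handled in Lemma~\ref{KGfree-as}, so the free-case computation carries over verbatim with $Y_{n,k}m$ in place of $g^{-1}$.
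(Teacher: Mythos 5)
Your proposal is correct and follows essentially the same route as the paper: reduce via \eqref{KGE4} to the scalar jump of $\calR(\omega^2-\mu^2)$, change variables to $\theta$ so the kernel becomes $\frac{\I}{2}\int_{-\pi}^{\pi}\E^{-\I t\Phi_v(\theta)}Y_{n,k}(\theta)\,g(\theta)^{-1}d\theta$ with the phase $\Phi_v=g+v\theta$ of Lemma~\ref{KGfree-as}, then apply Lemma~\ref{lem:vC} with the uniform Wiener-algebra bounds for \eqref{KGE-as1} and the decomposition near/away from the degenerate point $\theta_0$ together with the case analysis (a)--(c) of Theorem~\ref{end1} for \eqref{KGE-as2}. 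Your explicit handling of the multipliers $\omega,\omega^2$ and the smooth factor $m(\theta)$ only spells out bookkeeping the paper leaves implicit.
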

\begin{proof}
{\it Step i)}
Due to the representation \eqref{PP1} and formula \eqref{RJ-rep} it suffices to
obtain \eqref{KGE-as1}--\eqref{KGE-as2} for the operator with the kernel
\begin{align}\nonumber
[\bK(t)]_{n,k} &=\int\limits_\mu^{\sqrt{\mu^2+4}}\!\!\!\E^{-\I t\omega}\left[
\frac{f_k^+(\theta_+) f_n^-(\theta_+)}{W(\theta_+)}
- \frac{f_k^+(\theta_-) f_n^-(\theta_-)}{W(\theta_-)} \right] d\omega\\
\label{Knk}
\!\!\!\!&=-\!\!\!\int\limits_{-\pi}^{\pi}\E^{-\I tg(\theta)}\,
\frac{f_k^+(\theta) f_n^-(\theta)}{W(\theta)}\,
\frac{\sin\theta\, d\theta}{g(\theta)}\\
\nonumber
\!\!\!\!&=\!\frac \I2\int\limits_{-\pi}^{\pi}\E^{-\I t\Phi_v(\theta)}\,\,
h_k^+(\theta) h_n^-(\theta)T(\theta)\,
\frac{d\theta}{g(\theta)},~~n\le k,
\end{align}
where the  phase function $\Phi_v(\theta)$ is defined in \eqref{phi}.
For \eqref{KGE-as1} we need to prove that
\begin{equation}\label{PP2}
\sup_{n\le k}\Big|[\mathbf{K}(t)]_{n,k}\Big|\le C t^{-1/3},\quad t\ge 1.
\end{equation}
Just as in the proof of Theorem \ref{end1} (i)
we consider three different cases (a), (b) and (c).
Using the properties of the  phase function obtained in Lemma~\ref{KGfree-as}
one can now proceed as in the proof of Theorem~\ref{end1} (i).\\
{\it Step ii)}
Recall that $-\pi/2<\theta_0<0$. Denote
${\bf J}:=\{\theta:~|\theta\pm\theta_0|\le \nu|\theta_0|\}$, where $\nu=\nu(\mu)$ 
is defined in Lemma \ref{KGfree-as}.
We represent $[\bK(t)]_{n,k}$ as the sum
\[
[\bK(t)]_{n,k}=[\bK^{\pm}(t)]_{n,k}+[{\tilde\bK}(t)]_{n,k}
\]
where
\begin{align*}
[\bK^\pm(t)]_{n,k}&=\frac{1}{2\pi} \int_{|\theta\pm\theta_0|\le \delta}
\E^{-\I t\Phi_v(\theta)} Y_{n,k}(\theta) \frac{d\theta}{g(\theta)},\\
[\tilde{\bK}(t)]_{n,k}&=\frac{1}{2\pi} \int_{\theta\in[-\pi,\pi]\setminus {\bf J}}
\E^{-\I t\Phi_v(\theta)} Y_{n,k}(\theta) \frac{d\theta}{g(\theta)},
\end{align*}
and $Y_{n,k}(\theta)=h_k^+(\theta)h_n^-(\theta) T(\theta)$ as above.
Note that the bound \eqref{phi-c} holds for $\theta\in{\bf J}$ also. Hence
one can now proceed as in the proof of Theorem~\ref{end1} (ii) to obtain \eqref{KGE-as1}.
\end{proof}

\begin{remark}
In the case $\mu=0$ the factor $\sin(\theta/2)$ in the denominator
of \eqref{Knk} implies that we cannot get \eqref{KGE-as1} and \eqref{KGE-as2}
for $[\E^{-\I t{\bf H}}{\bf P}_c]^{12}$ in this case.
Nevertheless, the analogous expression for the other entries of $[\E^{-\I t{\bf H}}{\bf P}]$
does not contain this factor in the denominator and hence
asymptotics \eqref{KGE-as1} and \eqref{KGE-as2} with the decay rate $t^{-1/3}$ hold for these entries.
\end{remark}
Now we consider the non-resonant case and obtain asymptotics of type \eqref{as-new}
and \eqref{as2-new} for  equation \eqref{KGE1}. To this end note that $\mathbf{H}$ has a resonance at a boundary
point of the continuous spectrum $\overline{\Gamma}$ if and only if $H$ has a resonance at the corresponding
boundary point of its continuous spectrum $[0,4]$.
\begin {theorem}\label{wend}
i) Let $\mu\ge0$, $q\in\ell^1_2$. Then in the non-resonant case the following asymptotics hold
\begin{equation}\label{KG-asn1}
\Vert \E^{-\I t{\bf H}}{\bf P}_{c}\Vert_{{\bf l}^1_1\to {\bf l}^\infty_{-1}}=\mathcal{O}(t^{-4/3}),\quad t\to\infty.
\end{equation}
ii) Let  $\mu>0$ and $q\in\ell^1_2$. Then in the non-resonant case and for any $\sigma>3/2$ the following asymptotics hold
\begin{equation}\label{KG-asn2}
\Vert \E^{-\I t{\bf H}}{\bf P}_c\Vert_{{\bf l}^2_\sigma\to {\bf l}^2_{-\sigma}}=\mathcal{O}(t^{-3/2}),\quad t\to\infty.
\end{equation}
iii) In the case  $\mu=0$ the asymptotics \eqref{KG-asn2} hold under the stronger conditions $q\in\ell^1_3$ and $\sigma>5/2$.
\end{theorem}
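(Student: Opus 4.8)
The plan is to reduce everything to oscillatory integrals with the phase $\Phi_v(\theta)=g(\theta)+v\theta$, $g(\theta)=\sqrt{2-2\cos\theta+\mu^2}$, already analysed in Lemma~\ref{KGfree-as}, and then to invoke the machinery built for the Schr\"odinger operator in Sections~\ref{ll-sec}--\ref{ll2-sec}. Using the representation \eqref{KGE4} together with \eqref{PP1} it suffices to establish the claimed bounds for each of the four matrix entries separately, and each of these reduces, via the substitution $\omega^2-\mu^2=2-2\cos\theta$, i.e.\ $\omega=\pm g(\theta)$ (so that $d\omega=\pm\frac{\sin\theta}{g(\theta)}\,d\theta$), to integrals $\int_{-\pi}^{\pi}\E^{\mp\I t g(\theta)}A_{n,k}(\theta)\,d\theta$ whose amplitude $A_{n,k}$ is built from $T$, $R^\pm$, $h_n^\pm$, a trigonometric polynomial, powers of $\frac{1}{\sin\theta}$ and the new factor $\frac{1}{g(\theta)}$; the factor $\frac{\sin\theta}{g(\theta)}$ coming from $d\omega$ conveniently interacts with these denominators. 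For $\mu>0$ the function $g(\theta)^{-1}$ is smooth on $[-\pi,\pi]$ with all derivatives bounded, hence can be absorbed into the amplitude without affecting membership in $\mathcal A$ or the bounds from Theorem~\ref{thm:scat}, Lemma~\ref{newl} and Lemmas~\ref{T-est}, \ref{lem:proizv}; accordingly the whole analysis of Theorems~\ref{t-new} and~\ref{t-new1} goes through with $2-2\cos\theta$ replaced by $g(\theta)$ throughout.

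For part~(i) I would mimic the proof of Theorem~\ref{t-new}. Writing the jump of $\mathbf R$ across $\Gamma$ through the jump of $\calR$ and using the consistency relation $T\overline{R^-}+\overline{T}R^+=0$ exactly as there, one integration by parts --- based on $\E^{\mp\I tg(\theta)}=\frac{\pm\I g(\theta)}{t\sin\theta}\,\partial_\theta\E^{\mp\I tg(\theta)}$, which cancels the $1/g$ and produces the factor $t^{-1}$ --- brings the kernel into the form
\[
\frac{C}{t}\int_{-\pi}^{\pi}\E^{\mp\I tg(\theta)}\,\frac{d}{d\theta}\Big[\frac{|T(\theta)|^2}{\sin\theta}\big(f_k^+(\theta)f_n^+(-\theta)+f_k^-(\theta)f_n^-(-\theta)\big)\Big]\,d\theta ,
\]
i.e.\ exactly the structure of \eqref{I-est}, with the three resulting types of terms (one carrying a factor $k-n$) and with $2-2\cos\theta$ replaced by $g(\theta)$ in the phase. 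Each term is then estimated by splitting $[-\pi,\pi]$ (splitting once more at $\theta=0$ when $\mu=0$, where $g$ has a corner) into the subintervals on which $|\Phi_v''|$ or $|\Phi_v'''|$ is bounded below --- whose existence is what Lemma~\ref{KGfree-as}, step~i), provides --- and applying Lemma~\ref{lem:vC} with $s=2$ or $s=3$, together with Theorem~\ref{thm:scat} and Lemmas~\ref{T-est}, \ref{lem:proizv}. The worst case $s=3$ contributes $t^{-1/3}$, hence $t^{-4/3}$ overall, and the polynomial factors $1+|n|$, $1+|k|$ produce the $\ell^1_1\to\ell^\infty_{-1}$ bound; since only one differentiation of the amplitude is needed here, $q\in\ell^1_2$ suffices uniformly in $\mu\ge0$.

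For parts~(ii) and~(iii) I would follow the scheme of Theorem~\ref{t-new1}: after the integration by parts producing $t^{-1}$, split $[-\pi,\pi]$ into the neighbourhood $\mathbf J=\{\theta:\ |\theta\pm\theta_0|\le\nu|\theta_0|\}$ of the two symmetric degenerate stationary points $\pm\theta_0$ and its complement. On the complement Lemma~\ref{lem:vC} with $s=2$ yields $t^{-1/2}$ and, with the weights, a Hilbert--Schmidt bound of type \eqref{M-est}. Near $\pm\theta_0$ one runs the dyadic decomposition $t_j=t^{-(1/2)^j}$ of Lemma~\ref{Lem:K}, already adapted to the wave phase in Lemma~\ref{KGfree-as}, step~ii): in the region $|v\mp v_0|\le\frac12 v_0t_jt^\ve$ one uses the support bound on $(n,k)$, and in the complementary region one integrates by parts once more via $\Phi_v'(\theta_0\pm t_{j+s})=\frac12 g'''(\tilde\theta)(\theta-\theta_0)^2+v-v_0$, exactly as there; the scattering relations \eqref{scat-rel} handle the configurations (a) $n\le k\le 0$, (b) $0\le n\le k$, (c) $n\le 0\le k$ as in Theorems~\ref{end1} and~\ref{t-new1}. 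For $\mu>0$ this gives $t^{-3/2}$ with $\sigma>3/2$ and only $q\in\ell^1_2$, because $\theta_0\in(-\pi/2,0)$ stays bounded away from the band edges $\theta=0,\pm\pi$, so the relevant functions can be differentiated twice there by Lemma~\ref{newl}(i).

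The main obstacle is case~(iii), $\mu=0$. Then $\varkappa=1$, $v_0=1$ and $\theta_0=0$: the degenerate stationary point merges with the band edge $\theta=0$, where $g(\theta)=2|\sin(\theta/2)|$ vanishes and $g^{-1}$, $g^{-2}$ are singular. Zooming into $\theta=0$ along the dyadic scales $t_j$ therefore forces two changes. First, the derivative estimates for $h_n^\pm$ must now be used up to the band edge, which requires the extra moment $q\in\ell^1_3$ in Lemma~\ref{newl}(ii) with $s=2$. Second, at the scale $|\theta-\theta_0|\sim t_j$ the factor $g^{-1}\sim t_j^{-1}$ represents a genuine loss (even after the $\sin\theta$ from $d\omega$ and the vanishing $|T|^2\sim\sin^2\theta$ in the non-resonant case are taken into account, a second integration by parts re-creates a borderline $\theta^{-1}$ singularity), and this loss can only be absorbed by strengthening the weight to $\sigma>5/2$. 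The technical heart of part~(iii) is the bookkeeping that makes this precise: showing that the combinations occurring, such as $T(\theta)h_m^\pm(\theta)/\big(g(\theta)\sin\theta\big)$, still lie in $\mathcal A$ with the claimed $\mathcal{O}(1+|m|)$ bounds --- using the representations of \cite{emt} as in the proof of Lemma~\ref{T-est} --- and that the repeated cancellation of $g$'s under integration by parts leaves all remaining edge singularities integrable at the dyadic scales.
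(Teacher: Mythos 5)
Your treatment of parts (i) and (ii) coincides with the paper's: the paper likewise reduces to the operators $\bM_j(t)$ with the wave phase $\Phi_v$ and simply reruns the arguments of Theorems~\ref{t-new} and~\ref{t-new1}, using that for $\mu>0$ the factor $g(\theta)^{-1}$ is smooth and that $\theta_0$ stays away from the band edges. The gap is in part (iii), where you propose the wrong mechanism. For $\mu=0$ the paper does \emph{not} continue the dyadic stationary--phase analysis around the merged point $\theta_0=0$; it observes instead that on ${\bf J}_0$ the phase that survives the first integration by parts is $2\sin(\theta/2)$, whose derivative $\cos(\theta/2)$ is bounded away from zero there. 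One therefore keeps the oscillation $\E^{-\I\theta(k-n)}$ inside the amplitude and integrates by parts a \emph{second} time against this non-stationary phase. Each differentiation of $\E^{-\I\theta(k-n)}$ costs a factor $k-n$, which is exactly why the weight must be raised to $\sigma>5/2$, and controlling two $\theta$-derivatives of $h^\pm$ up to the band edge via \eqref{Jostderiv} is exactly why $q\in\ell^1_3$ is needed. Your proposal to ``zoom into $\theta=0$ along the dyadic scales $t_j$'' misses this and is never carried out; you yourself defer ``the bookkeeping'' that would be the entire content of the argument.

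Two of the premises behind your version of (iii) are also off. First, the singular factor $g(\theta)^{-1}$ coming from $d\omega$ is consumed already by the first integration by parts, since $g'(\theta)=\sin\theta/g(\theta)$ (this is visible in \eqref{Knk}); it does not reappear at dyadic scales, so there is no ``loss $g^{-1}\sim t_j^{-1}$'' to absorb. Second, after writing $\frac{|T(\theta)|^2}{\sin\theta}=\frac{4}{|W(\theta)|^2}\,\sin\theta\cdot\frac{1}{\sin^2\theta}\cdot\sin\theta$ appropriately, the paper's bound \eqref{M1} shows that the twice-differentiated amplitude has \emph{no} residual $\theta^{-1}$ singularity in the non-resonant case (the factor $\sin\theta$ regularizes the negative-index Jost functions through the scattering relations, as in Lemma~\ref{T-est}); your claimed ``borderline $\theta^{-1}$ singularity'' created by the second integration by parts does not occur, and the weight increase is not there to absorb it. As written, part (iii) of your argument would need to be replaced by the non-stationary-phase computation just described.
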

\begin{proof}
We consider
$[\E^{-\I t{\bf H}}{\bf P}_c]^{12}$ and the case $n\le k$ only.
As in the proof of Theorems \ref{t-new} and \ref{t-new1} we need to consider the operators $\bM_j(t)$ with the kernels
\[
[\bM_j(t)]_{n,k}=t^{-1}\int_{-\pi}^{\pi}\E^{-\I t\Phi_v(\theta)}Z_{n,k}^j(\theta) d\theta,
\]
where $Z_{n,k}$ and $\Phi_v(\theta)$ are defined in \eqref{Psi-nk}--\eqref{Z-nk} and \eqref{phi},
and obtain the asymptotics
\begin{equation}\label{M-asn1}
\Vert \bM_j(t)\Vert_{{\bf l}^1_1\to {\bf l}^\infty_{-1}}=\mathcal{O}(t^{-4/3}),\quad t\to\infty,
\end{equation}
\begin{equation}\label{M-asn2}
\Vert \bM_j(t)\Vert_{{\bf l}^2_\sigma\to {\bf l}^2_{-\sigma}}=\mathcal{O}(t^{-3/2}),\quad t\to\infty,\quad\sigma>3/2.
\end{equation}
The asymptotics {\eqref{M-asn1}} for $\mu\ge 0$ and the asymptotics {\eqref{M-asn2}} in the case $\mu>0$
can be established as in the proofs of Theorems \ref{t-new} and \ref{t-new1}.

In the case $\mu=0$ we have $\theta_0=0$ and ${\bf J}={\bf J}_0=\{\theta:~|\theta|\le \pi/6\}$.
The integrals over $[-\pi,\pi]\setminus{\bf J}_0$ can be estimated as in the case $\mu>0$.
Concerning the integral over ${\bf J}_0$ we will not split it as in \eqref{I-est} but consider the whole integral
\[
[\bM(t)]_{n,k}=t^{-1}\int_{{\bf J}_0}\E^{-2\I t\sin(\theta/2)}\frac{d}{d\theta}
\Big(\E^{-\I\theta(k-n)}\frac{|T(\theta)|^2}{\sin\theta}h_k^+(\theta)h_n^{+}(-\theta)\Big) d\theta.
\]
We apply integration by parts once more and obtain the asymptotics of type \eqref{M-asn2}
for $\bM(t)$ with decay rate $t^{-2}$ and with $\sigma>5/2$ if we prove that
\[
\Big|\frac{d^2}{d\theta^2}\left(\frac{|T(\theta)|^2}{\sin^2\theta}h_k^+(\theta)h_n^{+}(-\theta)\sin\theta\right)\Big|
\le C(1+|n|^2)(1+|k|^2),\quad \theta\in{\bf J}_0.
\]
Since  $T(\theta)/\sin(\theta)=2\I /W(\theta)$ and its first and second derivatives are bounded
for $q\in\ell^1_3$, it suffices to prove that
\begin{equation}\label{M1}
\Big|\frac{d^2}{d\theta^2} \big( h_k^+(\theta)h_n^{+}(-\theta)\sin\theta\big)\Big|
\le C(1+|n|^2)(1+|k|^2),\quad \theta\in{\bf J}_0,
\end{equation}
which follows from \eqref{Jostderiv} with $p=0,1$ and from the following bound
\[
|\frac{d^2}{d\theta^2} \big( h_m^{+}(\theta)\sin\theta\big)|\le C\max\{-m^2,1\},\quad \theta\in {\bf J}_0.
\]
The last bound in the case $m\ge 0$ follows from \eqref{Jostderiv} with $p=2$. In the case $m<0$
one needs to apply the scattering relation as before.
\end{proof}
\bigskip
\noindent{\bf Acknowledgments.} We are very grateful to Hans Georg Feichtinger, Michael Goldstein, Wilhelm Schlag, and Andreas Seeger for helpful discussions, to Michael Goldberg for hints with respect to the literature, and to Markus Holzleitner for bringing some typos to our attention.
We are also indebted to the referee for challenging us to improve our results.



\begin{thebibliography}{99}

\bibitem{CT}
S. Cuccagna and M. Tarulli,
{\em On asymptotic stability of standing waves of discrete Schr\"odinger equation in $\mathbb{Z}$},
SIAM J. Math. Anal. {\bf 41} (2009), 861--885.

\bibitem{DT}
P. Deift and E. Trubowitz,
{\em Inverse scattering on the line},
Comm. Pure Appl. Math. {\bf 32} (1979), 121--251.

\bibitem{EG}
I. Egorova and L. Golinskii,
{\em On the location of the discrete spectrum for complex Jacobi matrices},
Proc. Amer. Math. Soc. {\bf 133} (2005), 3635--3641.

\bibitem{emt}
I. Egorova, J. Michor, and G. Teschl,
{\em Scattering theory with finite-gap backgrounds:  Transformation operators and characteristic properties of scattering data},
Math. Phys. Anal. Geom. {\bf 16} (2013), 111--136.

\bibitem{EKMT}
I. Egorova, E. Kopylova, V. Marchenko, and G. Teschl,
{\em Dispersion estimates for one-dimensional Schr\"odinger and Klein--Gordon equations revisited},
\arxiv{1411.0021}

\bibitem{GGHT}
F. Gesztesy, J. A. Goldstein, H. Holden, and G. Teschl,
{\em Abstract wave equations and associated Dirac-type operators},
Ann. Mat. Pura Appl. {\bf 191} (2012), 631--676.

\bibitem{G}
M. Goldberg,
{\em Transport in the one-dimensional Schr\"odinger equation},
Proc. Amer. Math. Soc. {\bf 135} (2007), 171--3179.

\bibitem{GS}
M. Goldberg and W. Schlag,
{\em Dispersive estimates for Schr\"odinger operators in dimensions one and three},
Commun. Math. Phys. {\bf 251} (2004), 157--178.

\bibitem{jeka}
A. Jensen and T. Kato,
{\em Spectral properties of Schr\"odinger operators and time-decay of the wave functions},
Duke Math. J. {\bf 46} (1979), 583--611.

\bibitem{KPS}
P. G. Kevrekidis, D. E. Pelinovsky, and A. Stefanov,
{\em Asymptotic stability of small bound states in the discrete nonlinear Schr\"odinger equation}
SIAM J. Math. Anal. {\bf 41} (2009), 2010--2030.

\bibitem{KT}
M. Keel and T. Tao,
{\em Endpoint Strichartz estimates},
Amer. J. Math. {\bf 120} (1998), 955--980.

\bibitem{kk10}
A. Komech and E. Kopylova,
{\em Dispersion Decay and Scattering Theory},
John Wiley and Sons, Hoboken, NJ, 2012.

\bibitem{kkk}
A. Komech, E. Kopylova, and M. Kunze,
{\em Dispersion estimates for 1D discrete Schr\"odinger and Klein-Gordon equations},
Appl. Anal. {\bf 85} (2006), no. 12,  1487--1508.

\bibitem {k09}
E. Kopylova,
{\em On the asymptotic stability of solitary waves in the discrete Schr\"odinger equation coupled to a nonlinear oscillator},
Nonlinear Anal. {\bf 71} (2009), no. 7--8, 3031--3046.

\bibitem{PS11}
D. Pelinovsky and A. Sakovich,
{\em Internal modes of discrete solitons near the anti-continuum limit of the dNLS equation},
 Physica D {\bf 240} (2011), 265--281.

\bibitem{PS}
D. Pelinovsky and A. Stefanov,
{\em On the spectral theory and dispersive estimates for a discrete Schr\"odinger equation in one dimension},
J. Math. Phys. {\bf 49}, (2008), 113501.

\bibitem{S}
W. Schlag,
{\em Dispersive estimates for Schr\"odinger operators: a survey},
in "Mathematical aspects of nonlinear dispersive equations", 255--285, Ann. of Math. Stud. {\bf 163}, Princeton Univ. Press, Princeton, NJ, 2007.

\bibitem{SK}
A. Stefanov and P. G. Kevrekidis,
{\em Asymptotic behavior of small solutions for the discrete nonlinear Schr\"odinger and Klein-Gordon equations},
Nonlinearity {\bf 18} (2005), 1841--1857.

\bibitem{St}
E. M. Stein, {\em Harmonic analysis: real-variable methods, orthogonality, and oscillatory integrals},
Princeton Math. Series {\bf 43}, Princeton University Press, Princeton, NJ, 1993.

\bibitem{tjac}
G. Teschl,
{\em Jacobi Operators and Completely Integrable Nonlinear Lattices},
Math. Surv. and Mon. {\bf 72},  Amer.\ Math.\ Soc., Rhode Island, 2000.

\bibitem{W}
G. N. Watson,
{\em A Treatise on the Theory of Bessel Functions},
Cambridge University Press, Cambridge, 1886.
\end{thebibliography}
\end{document}